\numberwithin{equation}{section}
\newtheorem{Definition}{Definition}[section]
\newtheorem{theorem}[Definition]{Theorem}
\newtheorem{lemma}[Definition]{Lemma}
\newtheorem{proposition}[Definition]{Proposition}
\newtheorem{corollary}[Definition]{Corollary}
\begin{document}
\title{\Large \bf Normal Subgroup  Based Power  Graph of a  finite Group}
\author{A. K. Bhuniya and Sudip Bera}
\date{}
\maketitle

\begin{center}
Department of Mathematics, Visva-Bharati, Santiniketan-731235, India. \\
anjankbhuniya@gmail.com, sudipbera517@gmail.com
\end{center}

\begin{abstract}
For a finite group $G$ with a  normal subgroup $H$, the normal subgroup based power graph of $G$, denoted by $\Gamma_H(G)$ whose vertex set $V(\Gamma_H(G))=(G\setminus H)\bigcup \{e\}$ and two vertices $a$ and $b$ are edge connected  if $aH=b^mH$ or $bH=a^nH$ for some $m, n \in \mathbb{N}$. In this paper we obtain some fundamental characterizations of the normal subgroup based power graph. We show some relation between the  graph $\Gamma_H(G)$ and the power graph $\Gamma(\frac{G}{H})$.  We show that $\Gamma_H(G)$ is complete if and only of $\frac{G}{H}$ is cyclic  group of order $1$ or $p^m$, where $p$ is prime number and $m\in \mathbb{N}$. $\Gamma_H(G)$ is planar if and only if $|H|=2$   or  $3$ and $\frac{G}{H}\cong \mathbb{Z}_2\times \mathbb{Z}_2 \times \cdots \times \mathbb{Z}_2$. Also $\Gamma_H(G)$ is Eulerian if and only if $|G|\equiv |H|$ mod$ 2$.
\end{abstract}

\section{Introduction}
The study of different algebraic structures using graph theory becomes an exciting research topic in the last few decades, leading to many fascinating results and questions, \cite{D},  \cite{atani}, \cite{sen}, \cite{ela},  \cite{r}.  Given an algebraic structure $S$, there are different formulations to associate a directed or undirected graph to $S$, and we can study the algebraic properties of $S$ in terms of properties of associated graphs.

The directed power graph of a semigroup was introduced by Kelarev and Quinn \cite{K}.  If $S$ is a semigroup, then the directed power graph $\overrightarrow{\mathcal{P}(S)}$ of $S$ is a directed graph with $S$ as the set of all vertices and for any two distinct vertices $u$ and $v$ of $S$, there is an arc from $u$ to $v$ if $v=u^m$ for some positive integer $m$. Then in \cite{sen} Chakrabarty, Ghosh and Sen  defined the undirected power graph $\Gamma(S)$ of a semigroup $S$ such that two distinct elements $u$ and $v$ of $S$ are edge connected in $\Gamma(S)$ if $u=v^{m}$ or $v=u^{m}$ for some positive integer $m$. They proved that for a finite group $G$, the undirected power graph $\Gamma(S)$ is complete if and only if $G$ is a cyclic group of order $1$ or $p^{m}$ for some prime $p$ and positive integer $m$. Then Chelvam and Sattanathan proved that the power graph $\Gamma(G)$ is Eulerian if and only if $|G|$ is odd. They also showed that for the finite Abelian  group $G$ the power graph $\Gamma(G)$ of the group $G$ is planar if and only if either $G\cong \mathbb{Z}_2\times \mathbb{Z}_2 \times \cdots \times \mathbb{Z}_2$ or $ \mathbb{Z}_3\times \mathbb{Z}_3 \times \cdots \times \mathbb{Z}_3$ or $ \mathbb{Z}_4\times \mathbb{Z}_4 \times \cdots \times \mathbb{Z}_4$ or $ \mathbb{Z}_2\times \mathbb{Z}_2 \times \cdots \times \mathbb{Z}_2 \times \mathbb{Z}_4\times \mathbb{Z}_4 \times \cdots \times \mathbb{Z}_4$.  In \cite{Cameran}, Cameron and Ghosh showed that for two finite abelian groups $G_1$ and $G_2$, $\mathcal{P}(G_1) \cong \mathcal{P}(G_2)$ implies that $G_1 \cong G_2$. They also conjectured that two finite groups with isomorphic undirected power graphs have the same number of elements of each order. Throughout this paper we denote $\Gamma^*(G)=\Gamma(G)\setminus\{e\}$, where $e$ is the identity of the group $G$ and $a\sim b$ means $a$ is edge connected with $b$.

In this direction  another well studied graph is zero -divisor graph. The zero- divisor graph of a commutative ring  was introduced by I. Beck in \cite{beck}. Let $R$ be a commutative ring with $1$. The zero- divisor graph $\Gamma(R)$ of $R$ is an undirected graph whose vertices are the nonzero zero-divisor of $R$ and two distinct vertices $x, y$ are adjacent if $xy=0$. There are many papers on assigning a graph to a ring, for instance, see \cite{Akbari}, \cite{beck}, \cite{Demeyer}.   An ideal based zero divisor graph of a commutative ring was introduced by Shane P. Redmond in \cite{r}. In \cite{r}, Redmond obtained the basic structure of the ideal based zero divisor graph. He characterized about the connectivity, planarity of the ideal based zero divisor graph. Also he found  the clique number, girth and he showed some  relations between the ideal based zero divisor graph $\Gamma_I(R)$ and the commutative ring $R$. Then some researcher generalized the concept of ideal based zero divisor graph in a semiring. In \cite{atani} Atani investigated the interplay between the semiring theoretic properties of the semiring $R$ and graph theoretic properties of the graph $\Gamma_I(R)$  for some ideal $I$ of $R$. He  showed that two ideals $I$ and $J$ of the semiring $R$ $\Gamma_I(R)=\Gamma_J(R)$ if and only if $I=J$. Let $I$ be an ideal of a semiring $R$. Then if there are nonadjacent elements $a, b\in \Gamma_I(R)$ such that the ideal $J=<a, b>$ is prime to $I$, then diam($\Gamma_I(R))=3$. In \cite{ela} Elavarasan and Porselvi studied some topological properties of an ideal based zero divisor graph of a poset. They also studied the semi ideal based zero divisor graph  structure of a poset $P$, and characterized its diameter.

Motivated by ideal based zero divisor graph we introduce a graph namely normal subgroup based power graph. Let $G$ be a finite group having a normal subgroup $H$. Then the normal subgroup based power graph $\Gamma_H(G)$ is a undirected graph whose vertex set $V(\Gamma_H(G))=(G\setminus H)\bigcup \{e\} $, and two distinct vertices $x$ and $y$ are edge connected if $xH=y^mH$ or $yH=x^nH$ for some $m, n \in \mathbb{N}$. Clearly normal subgroup based power graph $\Gamma_H(G)$ is a generalization of power graph $\Gamma(G)$ in the sense that $\Gamma_{\{e\}}(G)=\Gamma(G)$.

 In Section $2$ we characterize some basic properties of the graph $\Gamma_H(G)$. Then in Section $3, 4, 5$ we give the interplay between the graph theoretic properties of the graph $\Gamma_H(G)$ and the group theoretic properties of the quotient group $\frac{G}{H}$. Finally in Section $6$ we calculate the edge number of the graph $\Gamma_H(G)$ of any finite group $G$ having any normal subgroup $H$, and we find the clique number and the chromatic number of the graph $\Gamma_H(G)$. We show that  for a finite group $G$ having normal subgroup $H$, $\Gamma_H(G)$ is connected, $\Gamma_H(G)$ is neither bipartite nor tree. Also we show that the graph $\Gamma_H(G)$ is   Cayley graph if and only if $\frac{G}{H}$ is a cyclic $P$ group, $\Gamma_H(G)$ is planar if and only if $|H|$ is $2$ or $3$ and $\frac{G}{H}\cong \mathbb{Z}_2\times \mathbb{Z}_2\times \cdots \times \mathbb{Z}_2$. Also the graph $\Gamma_H(G)$ is Eulerian if and only if $|G|\equiv|H|$ (mod) $2$.
\section{Definition and basic structure}
Let $G$ be a finite group having a  normal subgroup $H$. Then the normal subgroup based power graph $\Gamma_H(G)$ is a undirected graph whose vertex set $V(\Gamma_H(G))=(G\setminus H)\bigcup \{e\} $, and two distinct vertices $x$ and $y$ are edge connected if $xH=y^mH$ or $yH=x^nH$ for some $m, n \in \mathbb{N}$.

Now  we give basic structure of the graph $\Gamma_H(G)$ and we show the interplay between the graph $\Gamma_H(G)$ and the power graph $P(\frac{G}{H})$.

\begin{proposition}
Let $G$ be a finite group of order $n$ and $H$ be a  normal subgroup of $G$. Then $\Gamma_H(G)$ is connected.
\end{proposition}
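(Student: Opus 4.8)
The plan is to show that the identity vertex $e$ is a universal vertex, i.e.\ adjacent to every other vertex of $\Gamma_H(G)$; connectedness then follows immediately, since any two vertices can be joined through $e$.

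First I would fix an arbitrary vertex $x \in G \setminus H$ and check the adjacency condition between $x$ and $e$. Reading the definition, $x$ and $e$ are edge connected precisely when $xH = e^m H$ or $eH = x^n H$ for some $m, n \in \mathbb{N}$. Since $e^m = e$ for all $m$, we have $e^m H = H$, so the first alternative would force $xH = H$, i.e.\ $x \in H$, which fails for $x \in G \setminus H$. The useful alternative is the second one: $eH = x^n H$ means $H = x^n H$, that is, $x^n \in H$.

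The key step is to produce such an exponent $n$. Because $G$ is finite, the quotient $\frac{G}{H}$ is a finite group, so the coset $xH$ has finite order, say $d$, in $\frac{G}{H}$. Then $(xH)^d = x^d H = H$, which gives $x^d \in H$ and hence $eH = x^d H$. Taking $n = d$ shows that $x \sim e$. Since $x$ was arbitrary in $G \setminus H$, every non-identity vertex is adjacent to $e$.

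Finally I would conclude: given any two distinct vertices $u, v$ of $\Gamma_H(G)$, the path $u \sim e \sim v$ (or a single edge, if one of them is $e$) connects them, so $\Gamma_H(G)$ is connected, in fact with diameter at most $2$. I expect no genuine obstacle here; the only point requiring care is the correct reading of the adjacency condition—specifically noting that $e^m H = H$ collapses one alternative—so that the relevant relation is $x^n \in H$, which the finiteness of $\frac{G}{H}$ supplies at once.
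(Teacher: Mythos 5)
Your proposal is correct and follows essentially the same route as the paper: both arguments show that $e$ is adjacent to every other vertex, the paper by noting $a^n=e$ (so $a^nH=eH$) for $n=|G|$, and you by using the order of $xH$ in $\frac{G}{H}$, which is the same idea. Your additional care in checking which alternative of the adjacency condition applies, and the observation that the diameter is at most $2$, are fine but not a genuinely different method.
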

\begin{proof}
Here we show that every vertex of  $\Gamma_H(G)$ is edge connected with $e$, the identity of the group $G$. For any $a\in V(\Gamma_H(G))$, we have $a^n=e$ implying $a^nH=eH$. And so $a$ is edge connected with $e$. Hence $\Gamma_H(G)$ is connected.
\end{proof}
\begin{proposition}
Let $G$ be a finite group of order $n$ and $H$ be a  normal subgroup of $G$. Then all  vertices of the graph $\Gamma_H(G)$ in any coset of $H$ form a clique.
\end{proposition}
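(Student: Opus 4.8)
The plan is to fix an arbitrary coset of $H$ and show directly from the definition that any two distinct vertices lying in it are edge connected; since clique-ness only requires pairwise adjacency, this suffices. So I would start by picking two distinct vertices $x, y$ that belong to a common coset $aH$ of $H$, and record the immediate consequence that $xH = aH = yH$.

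The key (and essentially the only) step is to observe that the adjacency condition is met by the trivial exponent. Since $1 \in \mathbb{N}$, we may take $m = 1$ in the defining relation $xH = y^m H$, and then $y^1 H = yH = xH$ holds precisely because $x$ and $y$ sit in the same coset. Hence $x \sim y$. As $x, y$ were arbitrary distinct vertices of the coset, every pair is adjacent, which is exactly the statement that these vertices form a clique.

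I would also dispose of the degenerate case separately for completeness: the coset $H$ itself meets the vertex set $V(\Gamma_H(G)) = (G \setminus H)\cup\{e\}$ only in the single vertex $e$, so it contributes a trivial (one-vertex) clique, while every other coset $aH$ with $a \notin H$ lies entirely inside $G \setminus H$ and therefore consists wholly of vertices, to which the argument above applies verbatim.

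There is essentially no obstacle here: the result is a direct unwinding of the definition, the only point worth flagging being that the adjacency rule permits $m = 1$, which forces same-coset elements to be joined. I would therefore keep the write-up short, emphasizing the $m = 1$ observation as the crux.
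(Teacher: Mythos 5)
Your proof is correct and follows essentially the same route as the paper: both arguments note that two elements of a common coset $aH$ satisfy $xH = aH = yH$, so adjacency holds with exponent $m=1$. Your explicit remark about the coset $H$ itself contributing only the single vertex $e$ is a small added care not present in the paper, but the core argument is identical.
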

\begin{proof}
The vertex set $V(\Gamma_H(G)$ of the graph $\Gamma_H(G)$ can be written as $V(\Gamma_H(G)=\bigcup_{a_iH}\bigcup\{e\}$, where $a_i\in G \setminus H$ Let $a_ih_1, a_ih_2 \in a_iH$, where $ h_1, h_2\in H$. Then $a_ih_1H=a_iH=a_ih_2H$ implying $a_ih_1$ is edge connected with $a_ih_2$. So all vertices in  any coset  of $H$ forms a clique in $\Gamma_H(G)$.
\end{proof}
\begin{corollary}
Let $G$ be a finite group of order $n$ and $H$ be a  normal subgroup of $G$ with $|H|=m$. Then the normal subgroup based power graph $\Gamma_H(G)$ contains atleast $[G:H]-1$ subgraphs isomorphic to $K_m$.
\end{corollary}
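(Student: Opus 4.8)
The plan is to read this off directly from the preceding proposition together with the coset decomposition of $G$. First I would write $G$ as the disjoint union of its left cosets of $H$, say $G = H \cup a_1H \cup \cdots \cup a_kH$ where $k = [G:H]-1$ and each $a_i \in G \setminus H$. Because the cosets of $H$ partition $G$, these $k$ non-trivial cosets are pairwise disjoint and each is contained entirely in $G \setminus H$, hence in $V(\Gamma_H(G))$.

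Next I would observe that every coset $a_iH$ has exactly $|H| = m$ elements, and since $a_iH \subseteq G \setminus H \subseteq V(\Gamma_H(G))$, all $m$ of these elements are genuine vertices of the graph. By the previous proposition, the vertices lying in a single coset of $H$ form a clique; a clique on $m$ vertices is precisely $K_m$. Therefore each of the cosets $a_1H, \ldots, a_kH$ induces a subgraph of $\Gamma_H(G)$ isomorphic to $K_m$. Since the cosets are pairwise disjoint, these are $k = [G:H]-1$ vertex-disjoint copies of $K_m$, which establishes the stated lower bound.

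There is no real obstacle here: the statement is essentially a bookkeeping consequence of the proposition just proved. The only point deserving a word of care is why the bound reads ``at least'' rather than ``exactly''. The coset $H$ itself contributes only the single vertex $e$ to $\Gamma_H(G)$ (the other $m-1$ elements of $H$ are excluded from the vertex set), so $H$ does not supply a $K_m$, and this accounts for the drop from $[G:H]$ to $[G:H]-1$. Any further copies of $K_m$ would have to be assembled from edges running between distinct cosets, which this argument makes no attempt to detect; hence the guaranteed count is exactly the $[G:H]-1$ intra-coset cliques.
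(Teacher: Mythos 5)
Your proof is correct and is exactly the intended argument: the paper states this corollary without proof as an immediate consequence of Proposition 2.2, relying on precisely the coset decomposition and intra-coset cliques you describe. Your additional remark explaining why the coset $H$ itself contributes no $K_m$ (only the vertex $e$ survives in the vertex set) is a sensible clarification of why the count is $[G:H]-1$ rather than $[G:H]$.
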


\begin{proposition}
Let $a_iH $ and $a_jH$ be two distinct cosets of $H$. If an element of $a_iH$ is edge connected with an element of $a_jH$, then each element of $a_iH$ is edge connected with each element of $a_jH$.
\end{proposition}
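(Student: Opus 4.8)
The plan is to reduce the edge-connection relation between two individual elements to a relation between their cosets alone, and then to observe that this coset relation is insensitive to the choice of coset representatives. First I would use that $H$ is normal, so that $\frac{G}{H}$ is a group and the natural projection $\pi:G\to \frac{G}{H}$, $\pi(x)=xH$, is a homomorphism. The one identity driving the whole argument is that for any $a\in G$, any $h\in H$, and any $m\in\mathbb{N}$,
$$(ah)^mH=(ahH)^m=(aH)^m=a^mH .$$
In particular the coset $(ah)^mH$ depends only on the coset $aH$ and not on the representative $ah$ chosen from it.

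Next I would unpack the hypothesis. Suppose some $a_ih_1\in a_iH$ is edge connected with some $a_jh_2\in a_jH$, where $h_1,h_2\in H$. By definition this means $a_ih_1H=(a_jh_2)^mH$ or $a_jh_2H=(a_ih_1)^nH$ for some $m,n\in\mathbb{N}$. Since $a_ih_1H=a_iH$ and $a_jh_2H=a_jH$, the displayed identity rewrites the first alternative as $a_iH=a_j^mH$ and the second as $a_jH=a_i^nH$. The point to stress is that each of these is a statement about the two cosets $a_iH$ and $a_jH$ only, carrying no trace of the particular elements $h_1,h_2$; so the existence of a single edge between the cosets already pins down the relevant coset equation.

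Finally I would run the computation in reverse for arbitrary representatives. Let $a_ih$ and $a_jh'$ be any elements of $a_iH$ and $a_jH$, with $h,h'\in H$. In the first case the same identity gives $a_ihH=a_iH=a_j^mH=(a_jh')^mH$, and symmetrically in the second case, so $a_ih$ and $a_jh'$ satisfy the edge-connection relation; they are automatically distinct because $a_iH$ and $a_jH$ are distinct cosets. Hence every element of $a_iH$ is edge connected with every element of $a_jH$. There is no genuine obstacle in this argument; the only step deserving care is the verification that taking powers commutes with passing to the quotient, which is precisely where normality of $H$ (and the resulting group structure on $\frac{G}{H}$) is used, and once that representative-independence is secured the conclusion is immediate.
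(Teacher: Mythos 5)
Your proof is correct and follows essentially the same route as the paper: extract from the single edge the representative-independent coset relation $a_iH=a_j^mH$ or $a_jH=a_i^nH$, then apply it to arbitrary representatives. You are merely more explicit about the key identity $(ah)^mH=(aH)^m=a^mH$ and about the distinctness of the vertices, which the paper leaves implicit.
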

\begin{proof}
Suppose $a_ih_i$ is edge connected with $a_jh_j$ for some $h_i, h_j\in H$. Then either $a_ih_iH=(a_jh_jH)^m$ or $a_jh_jH=(a_ih_iH)^n$ for some $m, n\in \mathbb{N}$. Let $a_ih\in a_iH$ and $a_jh_1\in a_jH$. Then $a_ihH=a_iH=(a_jH)^m=(a_jh_1H^m)$ implying $a_ih$ is edge connected with $a_jh_1$. Similarly $a_ih\sim a_jh_1$ if $a_jh_jH=(a_ih_iH)^n$.
\end{proof}
\begin{center}
  \includegraphics[width=12in]{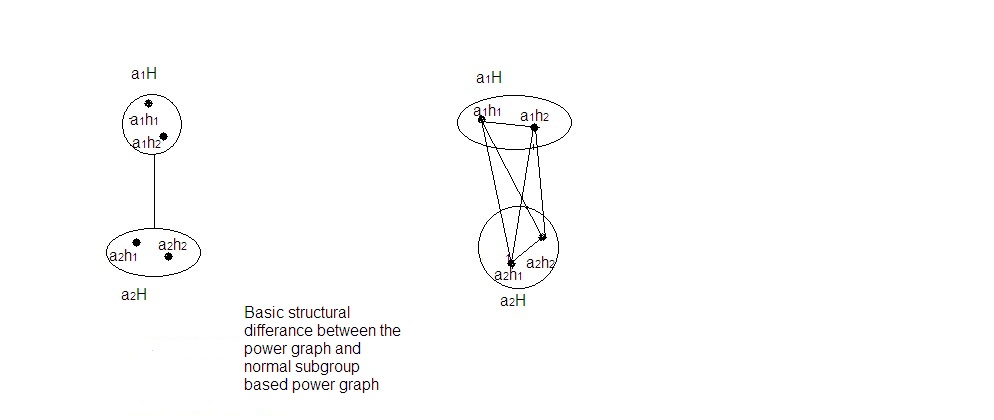}.
 \end{center}
 \begin{corollary}
 Let $|H|=m$. Then $\Gamma_H(G)$ contains atleast $t-[G:H]+1$ subgraphs isomorphic to $K_{2m}$, where $t$ is the number edges in $\Gamma(\frac{G}{H})$.
 \end{corollary}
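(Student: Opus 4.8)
The plan is to exploit the tight correspondence between edges of the power graph $\Gamma(\frac{G}{H})$ and complete subgraphs of $\Gamma_H(G)$, a correspondence already encoded in Propositions 2.2 and 2.4. First I would record the elementary observation that, for two \emph{distinct} cosets $a_iH$ and $a_jH$, an element of $a_iH$ is adjacent to an element of $a_jH$ in $\Gamma_H(G)$ precisely when $a_iH$ and $a_jH$ are adjacent as vertices of $\Gamma(\frac{G}{H})$; indeed both conditions are literally the requirement that $a_iH=(a_jH)^m$ or $a_jH=(a_iH)^n$ for some $m,n\in\mathbb{N}$. Thus every edge of $\Gamma(\frac{G}{H})$ lifts to a complete bipartite connection between the corresponding cosets inside $\Gamma_H(G)$.

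Next I would combine this with the two earlier propositions. By Proposition 2.2 each nonidentity coset $a_iH$ (with $a_i\in G\setminus H$) induces a clique $K_m$ on its $m$ vertices, and by Proposition 2.4, whenever $a_iH\sim a_jH$ in $\Gamma(\frac{G}{H})$ with both cosets different from $H$, every element of $a_iH$ is joined to every element of $a_jH$. Hence the $2m$ vertices of $a_iH\cup a_jH$ are pairwise adjacent and span a subgraph isomorphic to $K_{2m}$. The key point requiring care is that this argument applies only to edges joining two cosets \emph{both} distinct from $H$, because the coset $H$ contributes only the single vertex $e$ to $V(\Gamma_H(G))$ rather than $m$ vertices, so an edge of $\Gamma(\frac{G}{H})$ incident to the identity yields merely a $K_{m+1}$, not a $K_{2m}$.

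It then remains to count the edges of $\Gamma(\frac{G}{H})$ that avoid the identity vertex $H$. Since $\frac{G}{H}$ has $[G:H]$ elements and the identity of a power graph is adjacent to every other vertex (because $\overline{a}^{\,\mathrm{ord}(\overline{a})}=H$ for each $\overline{a}\neq H$), the vertex $H$ is incident to exactly $[G:H]-1$ edges of $\Gamma(\frac{G}{H})$. Consequently the number of edges joining two nonidentity cosets equals $t-([G:H]-1)=t-[G:H]+1$. Each such edge produces a copy of $K_{2m}$ by the previous paragraph, and distinct edges correspond to distinct unordered pairs of cosets, hence to $K_{2m}$ subgraphs on distinct vertex sets. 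This gives at least $t-[G:H]+1$ subgraphs isomorphic to $K_{2m}$, and the bound is stated as ``at least'' precisely because larger mutually adjacent families of cosets produce additional, uncounted copies of $K_{2m}$.

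I anticipate no serious obstacle: the adjacency lifting and the two clique facts are immediate from the quoted propositions, so the only substantive work is the bookkeeping around the identity coset, namely remembering that $H$ supplies a single vertex, subtracting the $[G:H]-1$ edges incident to the identity of $\Gamma(\frac{G}{H})$, and verifying that distinct qualifying edges yield genuinely distinct $K_{2m}$ subgraphs.
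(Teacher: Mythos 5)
Your proof is correct and follows exactly the route the paper intends: the corollary is stated as an immediate consequence of Propositions 2.2 and 2.4, and your argument simply makes that explicit, with the count $t-[G:H]+1$ arising from discarding the $[G:H]-1$ edges of $\Gamma(\frac{G}{H})$ incident to the identity coset (which contributes only the single vertex $e$). Your careful handling of that identity coset is precisely the bookkeeping the paper leaves implicit.
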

Let $G$ be  a  finite group and $H$ be a normal subgroup of $G$. Now we give an important relation between  the power graph $\Gamma(\frac{G}{H})$ of the group $\frac{G}{H}$ and the subgroup based power graph $\Gamma_H(G)$.
\begin{proposition}
Let $a$ and $b$ be two distinct vertices of the graph $\Gamma_H(G)$. Then $a$ is edge connected  with $b$ in $\Gamma_H(G)$ if and only if either $aH=bH$ or $aH$ is edge connected with $bH$ in the power graph $\Gamma(\frac{G}{H})$.
\end{proposition}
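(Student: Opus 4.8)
The plan is to prove both directions by directly comparing the adjacency condition defining $\Gamma_H(G)$ with the adjacency condition defining the power graph $\Gamma(\frac{G}{H})$, since the two conditions become almost verbatim the same once one passes to cosets. The only genuine discrepancy to track is that in $\Gamma_H(G)$ the vertices $a$ and $b$ are merely required to be distinct group elements, whereas an edge in $\Gamma(\frac{G}{H})$ requires the two cosets $aH$ and $bH$ to be distinct; the borderline case $aH=bH$ with $a\neq b$ is therefore exactly what forces the first alternative in the statement.

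For the forward implication I would assume $a\sim b$ in $\Gamma_H(G)$, so that by definition $aH=b^mH$ or $bH=a^nH$ for some $m,n\in\mathbb{N}$. If $aH=bH$, the first alternative of the conclusion holds and there is nothing further to prove. If instead $aH\neq bH$, then $aH$ and $bH$ are two distinct elements of $\frac{G}{H}$ satisfying $aH=(bH)^m$ or $bH=(aH)^n$, which is precisely the condition for $aH$ to be adjacent to $bH$ in the power graph $\Gamma(\frac{G}{H})$.

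For the converse I would treat the two alternatives separately. If $aH=bH$, then taking $m=1$ gives $aH=b^1H=b^mH$, so the distinct vertices $a$ and $b$ are edge connected in $\Gamma_H(G)$; this is also immediate from the earlier observation that all vertices lying in a single coset of $H$ form a clique. If instead $aH$ is edge connected to $bH$ in $\Gamma(\frac{G}{H})$, then by the definition of the power graph we have $aH=(bH)^m=b^mH$ or $bH=(aH)^n=a^nH$, which is exactly the defining condition for $a\sim b$ in $\Gamma_H(G)$.

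The main point requiring care---rather than any real obstacle---is the bookkeeping around distinctness and the identity: one must note that every vertex $a\in(G\setminus H)\cup\{e\}$ determines a genuine vertex $aH$ of $\frac{G}{H}$ (with $eH=H$ the identity coset), and that the same-coset case $aH=bH$ can never correspond to an edge of $\Gamma(\frac{G}{H})$, which is precisely why it must be recorded as the separate alternative $aH=bH$ in the statement. Beyond this, the argument is a direct translation of the two adjacency definitions into one another.
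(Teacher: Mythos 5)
Your proof is correct and follows essentially the same route as the paper: both directions are obtained by directly translating the adjacency condition $aH=b^mH$ or $bH=a^nH$ into the adjacency condition for the cosets in $\Gamma(\frac{G}{H})$. In fact you are slightly more careful than the paper's own argument, which glosses over the case $aH=bH$ in the forward direction (where $aH$ and $bH$ are not distinct vertices of $\Gamma(\frac{G}{H})$ and so cannot be adjacent there); your explicit case split is exactly the bookkeeping that justifies the ``either $aH=bH$ or'' clause in the statement.
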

\begin{proof}
Let $a \sim b$  in $\Gamma_H(G)$. Then either  $aH=b^mH$ or $bH=a^rH$, for some $m, r\in\mathbb{N}$. Clearly in any case $aH$ is edge connected with $bH$ in the power graph $\Gamma(\frac{G}{H})$ of the quotient group $\frac{G}{H}$.

conversely, suppose $aH$ is edge connected with $bH$ in the power graph $\Gamma(\frac{G}{H})$. Then either $aH= (bH)^m=b^mH$ or $bH=(aH)^r=a^rH$, for some $m, r\in\mathbb{N}$. Hence the result.
\end{proof}
 \begin{corollary}
The normal subgroup based power graph $\Gamma_H(G)$ contains atleast $|H|^{[G:H]-1}$ subgraphs isomorphic to $\Gamma(\frac{G}{H})$.
 \end{corollary}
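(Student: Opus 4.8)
The plan is to realize $\Gamma_H(G)$ as a kind of ``blow-up'' of the power graph $\Gamma(\frac{G}{H})$ and then to count the copies of $\Gamma(\frac{G}{H})$ that arise from choosing a transversal of the cosets of $H$. First I would enumerate the cosets of $H$ as $H=g_0H, g_1H, \ldots, g_{k-1}H$, where $k=[G:H]$ and $g_0=e$. The key observation is that in $\Gamma_H(G)$ the identity coset contributes only the single vertex $e$, whereas each of the remaining $k-1$ cosets contributes all $|H|$ of its elements as vertices (since $V(\Gamma_H(G))=(G\setminus H)\cup\{e\}$).

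Next I would construct a transversal by selecting $e$ from the identity coset and exactly one element $x_i$ from each non-identity coset $g_iH$. Since there are $|H|$ choices in each of the $k-1$ non-identity cosets and a unique forced choice in the identity coset, the number of such transversals is $|H|^{k-1}=|H|^{[G:H]-1}$. I claim that the subgraph of $\Gamma_H(G)$ induced on any such transversal $\{e, x_1, \ldots, x_{k-1}\}$ is isomorphic to $\Gamma(\frac{G}{H})$, with the isomorphism given by the coset map $x_i \mapsto x_iH$.

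To verify the claim I would invoke the preceding Proposition. The coset map is a bijection from the transversal onto the vertex set $\frac{G}{H}$ of $\Gamma(\frac{G}{H})$, because exactly one representative is chosen from each coset, the identity coset $H=eH$ being represented by $e$. For two distinct transversal elements $x_i, x_j$, their cosets are distinct, so $x_iH=x_jH$ fails; hence by the Proposition, $x_i\sim x_j$ in $\Gamma_H(G)$ if and only if $x_iH\sim x_jH$ in $\Gamma(\frac{G}{H})$. Thus edges are both preserved and reflected under the coset map, so the induced subgraph is indeed a faithful copy of $\Gamma(\frac{G}{H})$.

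Finally, distinct transversals yield distinct vertex sets and hence distinct induced subgraphs, so the $|H|^{[G:H]-1}$ transversals produce at least that many subgraphs isomorphic to $\Gamma(\frac{G}{H})$, giving the bound. The only delicate point, and the step I would handle most carefully, is the asymmetric role of the identity coset: because only $e$ (and not all of $H$) is a vertex of $\Gamma_H(G)$, the identity coset admits a \emph{unique} representative, which is precisely what makes the coset map a bijection onto all of $\frac{G}{H}$ rather than onto a proper subset. This is exactly why the exponent is $[G:H]-1$ and not $[G:H]$.
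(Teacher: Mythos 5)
Your proof is correct and is precisely the argument the paper intends: the corollary is stated without proof immediately after the proposition that $a\sim b$ in $\Gamma_H(G)$ iff $aH=bH$ or $aH\sim bH$ in $\Gamma(\frac{G}{H})$, and the implicit justification is exactly your transversal count — one forced choice of $e$ for the identity coset and $|H|$ choices in each of the $[G:H]-1$ remaining cosets, with the coset map giving the isomorphism. Your explicit attention to why the identity coset contributes only one representative (and hence why the exponent is $[G:H]-1$) is a welcome clarification of a point the paper leaves unstated.
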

\section{Complete normal subgroup based power graph }
A graph $\Gamma$ is called complete if every pair of distinct  vertices are adjacent. Let $G$ be a group and let $C$ be a subset of $G$ that is closed under taking inverses and does not contain the identity. Then the  Cayley graph $\Gamma(G, C)$ is the graph with the vertex set $G$ and the edge set $E(\Gamma(G, C))=\{gh:hg^-1\in C\}$.
\begin{theorem}
Let $G$ be a finite group of order $n$ and $H$ be a  normal subgroup of $G$. Then the normal subgroup based power graph $\Gamma_H(G)$ is complete if and only if $\frac{G}{H}$ is cyclic  group of order $1$ or $p^m$, for some prime number $p$ and $m\in \mathbb{N}$.
\end{theorem}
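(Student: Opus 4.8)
The plan is to transport the question of completeness of $\Gamma_H(G)$ to the power graph $\Gamma(\frac{G}{H})$ of the quotient, and then invoke the classical characterization of Chakrabarty, Ghosh and Sen recalled in the introduction, namely that the power graph of a finite group is complete if and only if the group is cyclic of order $1$ or $p^m$. The bridge between the two graphs is Proposition 2.6, which asserts that distinct vertices $a,b$ are adjacent in $\Gamma_H(G)$ exactly when $aH=bH$ or $aH\sim bH$ in $\Gamma(\frac{G}{H})$. So in effect I want to show that $\Gamma_H(G)$ is complete if and only if $\Gamma(\frac{G}{H})$ is complete, after which the cited theorem finishes both directions.

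For the forward implication, I would assume $\Gamma_H(G)$ is complete and take two arbitrary distinct cosets in $\frac{G}{H}$. If one of them is the identity coset $H$, it is automatically adjacent to every other vertex of $\Gamma(\frac{G}{H})$, since the identity is a power of every element in any power graph. Otherwise both cosets have the form $aH,bH$ with $a,b\in G\setminus H$ and $aH\neq bH$; then $a,b$ are distinct vertices of $\Gamma_H(G)$, hence adjacent by completeness, and since $aH\neq bH$, Proposition 2.6 forces $aH\sim bH$ in $\Gamma(\frac{G}{H})$. Thus $\Gamma(\frac{G}{H})$ is complete, and the classical theorem yields that $\frac{G}{H}$ is cyclic of order $1$ or $p^m$.

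For the converse, assume $\frac{G}{H}$ is cyclic of order $1$ or $p^m$, so by the classical theorem $\Gamma(\frac{G}{H})$ is complete. Take any two distinct vertices $a,b$ of $\Gamma_H(G)$. If $aH=bH$ they lie in a common coset and are adjacent by the coset-clique Proposition 2.2. If $aH\neq bH$, then $aH\sim bH$ in the complete graph $\Gamma(\frac{G}{H})$, so Proposition 2.6 again gives $a\sim b$. Hence every pair of distinct vertices is adjacent and $\Gamma_H(G)$ is complete. The degenerate case $|\frac{G}{H}|=1$, that is $G=H$, needs only the observation that the vertex set is then the single point $\{e\}$, which is vacuously complete.

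Since Proposition 2.6 and the classical completeness criterion carry essentially all the weight, I do not expect a serious obstacle here. The only point requiring a little care is the treatment of the identity coset in the forward direction: completeness of $\Gamma_H(G)$ gives no direct information about $H$, because no vertex of $\Gamma_H(G)$ other than $e$ lies in $H$, so one must instead use the intrinsic fact that the identity is universally adjacent in any power graph to conclude that $\Gamma(\frac{G}{H})$ is complete rather than merely complete on its non-identity vertices.
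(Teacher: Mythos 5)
Your proof is correct, and the necessity direction (complete $\Rightarrow$ cyclic of order $1$ or $p^m$) is essentially what the paper does: transfer completeness to $\Gamma(\frac{G}{H})$ via the adjacency correspondence and quote the Chakrabarty--Ghosh--Sen criterion. Where you genuinely diverge is in the sufficiency direction. The paper argues directly: if $\frac{G}{H}$ is a cyclic $p$-group, then for any vertices $a,b$ the cyclic subgroups $\langle aH\rangle$ and $\langle bH\rangle$ have orders $p^{k_1}$ and $p^{k_2}$, and since the subgroups of a cyclic $p$-group are totally ordered by inclusion, one of $aH$, $bH$ is a power of the other, giving adjacency without any appeal to the classical theorem. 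You instead invoke the classical criterion a second time to get completeness of $\Gamma(\frac{G}{H})$ and pull it back through Proposition 2.6. Both are valid; your version is shorter and makes the theorem transparently equivalent to the statement ``$\Gamma_H(G)$ is complete iff $\Gamma(\frac{G}{H})$ is complete,'' which is arguably the cleaner way to organize the result. The trade-off is that the paper's Corollary 3.2, which recovers the Chakrabarty--Ghosh--Sen theorem by setting $H=\{e\}$, becomes entirely vacuous under your proof (you assumed the full statement), whereas the paper's direct argument at least re-derives the sufficiency half independently. Your handling of the identity coset in the forward direction and of the degenerate case $G=H$ is careful and correct; the paper glosses over both.
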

\begin{proof}
First suppose that $\frac{G}{H}$ is a cyclic $p-$group, for some prime $p$. Then $|\frac{G}{H}|=p^t$, where $t\in \mathbb{N}$. Consider $a, b \in V(\Gamma_H(G))$. Then the order of the cyclic subgroups $<aH>$ and $<bH>$ of $\frac{G}{H}$ are $p^{k_1}$ and $p^{k_2}$ for some $k_1, k_2 \in \mathbb{N}$.

If $k_1\geq k_2$, then $<bH>\subseteq <aH>$ which implies that there exists $m\in \mathbb{N}$ such that $bH=a^nH$. Thus $a$ and $b$ are adjacent in  $\Gamma_H(G)$. Similar is the case $k_1\leq k_2$.

Conversely suppose that $\Gamma_H(G)$ is complete. Then the power graph $\Gamma(\frac{G}{H})$ is also complete, by Proposition $2.4$. Hence the group $\frac{G}{H}$ is a cyclic $p-$ group, where $p$ is a prime number.
\end{proof}
Now as a consequence, we have the following corollary proved by Chakrabarty et al. in [Theorem 2.12, \cite{sen}].
\begin{corollary}
Let $G$ be a finite group. Then the power graph $\Gamma(G)$ is complete if and only if $G$ is a cyclic group of order $1$ or $p^m$, where $p$ is prime number and $m\in \mathbb{N}$.
\end{corollary}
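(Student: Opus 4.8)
The plan is to transfer the question entirely to the power graph $\Gamma(\frac{G}{H})$ by means of the correspondence already established in the preceding proposition, namely that two distinct vertices $a,b$ are adjacent in $\Gamma_H(G)$ if and only if $aH=bH$ or $aH$ is adjacent to $bH$ in $\Gamma(\frac{G}{H})$. Under the coset map $a\mapsto aH$ the vertex $e$ goes to the identity coset and every $a\in G\setminus H$ goes to a nonidentity coset, so all cosets of $H$ are realised by vertices. From this one reads off that $\Gamma_H(G)$ is complete precisely when any two distinct cosets are adjacent in $\Gamma(\frac{G}{H})$, i.e. precisely when $\Gamma(\frac{G}{H})$ is complete. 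Thus both implications reduce to the single statement: the power graph of a finite group $K$ is complete if and only if $K$ is cyclic of order $1$ or $p^m$.

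For the forward direction I would take $K=\frac{G}{H}$ cyclic of order $1$ or $p^t$. The order-$1$ case is immediate, since $\Gamma_H(G)$ then has the single vertex $e$. When $K$ is a cyclic $p$-group, every subgroup is again cyclic of $p$-power order and there is a unique subgroup of each such order, so the subgroups of $K$ are totally ordered by inclusion. Hence for distinct cosets $aH,bH$ we have $\langle aH\rangle\subseteq\langle bH\rangle$ or $\langle bH\rangle\subseteq\langle aH\rangle$; in the first case $aH$ is a power of $bH$ and in the second $bH$ is a power of $aH$, so $aH\sim bH$. By the correspondence, $a\sim b$ in $\Gamma_H(G)$, giving completeness.

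For the converse, completeness of $\Gamma_H(G)$ forces $\Gamma(K)$ to be complete, and I would prove directly that this makes $K$ a cyclic $p$-group. First, pick $g\in K$ of maximal order and use that every $h$ is adjacent to $g$: either $h\in\langle g\rangle$ outright, or $g\in\langle h\rangle$, and in the latter case maximality of $|g|$ forces $\langle g\rangle=\langle h\rangle$, so again $h\in\langle g\rangle$; hence $K=\langle g\rangle$ is cyclic. Second, if two distinct primes $p\neq q$ divided $|K|$, then the cyclic group $K$ would contain an element $x$ of order $p$ and an element $y$ of order $q$, and adjacency of $x,y$ would force $x\in\langle y\rangle$ or $y\in\langle x\rangle$, i.e. $p\mid q$ or $q\mid p$, which is impossible. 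So $|K|$ is a prime power (or $1$), completing the reduction.

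The main obstacle is precisely this last converse step: it is the nontrivial content of the classical theorem of Chakrabarty et al., and the cleanest route is to establish it from scratch as above rather than invoking that result, which in this paper appears only as a corollary of the present theorem. The maximal-order argument for cyclicity and the coprime prime-order argument for being a $p$-group are the two points that require care; everything else is bookkeeping through the coset correspondence.
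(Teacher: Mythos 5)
Your argument is correct, but it takes a genuinely different route from the paper's. The paper proves this corollary in one line: set $H=\{e\}$, so that $\Gamma_H(G)=\Gamma(G)$ and $\frac{G}{H}\cong G$, and specialize Theorem 3.1. You instead prove the classical characterization of Chakrabarty, Ghosh and Sen from scratch: the forward direction via the total ordering by inclusion of the subgroups of a cyclic $p$-group, and the converse via a maximal-order element $g$ (every $h$ adjacent to $g$ lands in $\langle g\rangle$, giving cyclicity) together with a pair of elements of distinct prime orders (whose forced adjacency would give $p\mid q$ or $q\mid p$, ruling out two prime divisors). Your preliminary detour through $\Gamma_H(G)$ and the coset correspondence is unnecessary for this particular statement, which concerns only $\Gamma(G)$, but it is harmless. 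More importantly, your self-contained converse buys something the paper's derivation does not: the converse half of Theorem 3.1 concludes ``hence $\frac{G}{H}$ is a cyclic $p$-group'' directly from the completeness of $\Gamma(\frac{G}{H})$, which is precisely the content of the present corollary (cited there from the earlier literature). Deriving the corollary from the theorem is therefore circular unless one accepts that external citation; your direct proof of the converse supplies exactly the missing ingredient and makes the whole chain of implications self-contained.
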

\begin{proof}
If we consider the normal subgroup $H=\{e\}$. Then the corollary follows from the above theorem.
\end{proof}
\begin{theorem}
Let $G$ be a finite group and $H$ be a  normal subgroup of $G$. Then $\Gamma_H(G)$ is Cayley graph if and only if $\frac{G}{H}$ is cyclic group of order $1$ or $p^m$, where $p$ is prime number and $m\in \mathbb{N}$.
\end{theorem}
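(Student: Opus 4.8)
The plan is to prove both implications by reducing everything to the completeness criterion of Theorem 3.1, exploiting a single structural fact: every Cayley graph is vertex-transitive and therefore regular. The whole argument then turns on the observation that a regular graph possessing a vertex of full degree must be complete.

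For the forward implication, suppose $\frac{G}{H}$ is cyclic of order $1$ or $p^m$. Theorem 3.1 then gives that $\Gamma_H(G)$ is complete, so $\Gamma_H(G) \cong K_N$ where $N = |G| - |H| + 1$ is the number of vertices. Since the complete graph $K_N$ is realised as the Cayley graph $\Gamma(\mathbb{Z}_N, C)$ with connection set $C = \mathbb{Z}_N \setminus \{0\}$ (which is closed under inverses and omits the identity), $\Gamma_H(G)$ is a Cayley graph. This direction is immediate once completeness is available.

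For the converse, suppose $\Gamma_H(G)$ is a Cayley graph. The key point is that left translation by the group elements furnishes graph automorphisms acting transitively on the vertex set, so $\Gamma_H(G)$ is vertex-transitive and hence regular; thus all vertices share a common degree $d$. Now I invoke Proposition 2.1: its proof shows that for every vertex $a$ one has $a^n H = eH$, so $e$ is adjacent to every other vertex, giving $\deg(e) = N - 1$. By regularity $d = N - 1$, and a graph on $N$ vertices in which every vertex has degree $N-1$ is complete, whence $\Gamma_H(G) \cong K_N$. Applying Theorem 3.1 once more, completeness of $\Gamma_H(G)$ forces $\frac{G}{H}$ to be cyclic of order $1$ or $p^m$, as desired.

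Since the reasoning is short there is no real computational obstacle; the only step demanding care is the justification that a Cayley graph must be regular, for which one relies on the standard vertex-transitivity of Cayley graphs. After that, the implication ``regular $+$ a full-degree vertex $\Rightarrow$ complete'' does the essential work, and Theorem 3.1 supplies the conclusion in both directions.
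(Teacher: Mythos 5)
Your proposal is correct and follows essentially the same route as the paper: completeness via Theorem 3.1 gives the forward direction, and for the converse you use regularity of Cayley graphs together with the full degree of $e$ (from Proposition 2.1) to force completeness and then apply Theorem 3.1 again. You merely spell out two details the paper leaves implicit, namely the explicit realisation of $K_N$ as a Cayley graph of $\mathbb{Z}_N$ and the vertex-transitivity argument for regularity.
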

\begin{proof}
Let $\frac{G}{H}$  is a cyclic group of order $1$ or $p^m$. Then the normal subgroup based power graph $\Gamma_H(G)$ is complete. Hence a Cayley graph.

Conversely, suppose that the graph $\Gamma_H(G)$ is Cayley graph. Then $\Gamma_H(G)$ is regular. Since the vertex $e$ is adjacent to every other vertices, it follows that $\Gamma_H(G)$ is complete. Hence $\frac{G}{H}$ is cyclic group of order $1$ or $p^m$, where $p$ is prime number and $m\in \mathbb{N}$.
\end{proof}
\begin{corollary}
Let $G$ be a finite group. Then the the power graph $\Gamma(G)$ of the group $G$ is a Cayley graph if and only if $G$ is a cyclic group of order either $1$ or $p^m$, where $p$ is prime number and $m\in \mathbb{N}$.
\end{corollary}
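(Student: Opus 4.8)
The plan is to derive this statement as the $H=\{e\}$ specialization of the preceding Cayley-graph characterization (the theorem immediately above), mirroring the derivation of the analogous completeness corollary from the complete-graph theorem. The fact that makes this work is the identification of the ordinary power graph as a degenerate normal subgroup based power graph: when $H=\{e\}$ every coset $aH$ collapses to the singleton $\{a\}$, so the adjacency condition ``$aH=b^{m}H$ or $bH=a^{n}H$'' becomes ``$a=b^{m}$ or $b=a^{n}$'', which is precisely the edge relation of $\Gamma(G)$. Thus $\Gamma_{\{e\}}(G)=\Gamma(G)$, as already remarked in the Introduction.

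Concretely, I would fix $H=\{e\}$ and record the canonical isomorphism $\frac{G}{\{e\}}\cong G$, so that the quotient $\frac{G}{H}$ appearing in the above theorem is just $G$ itself. That theorem then asserts that $\Gamma_{\{e\}}(G)$ is a Cayley graph if and only if $\frac{G}{\{e\}}$ is cyclic of order $1$ or $p^{m}$. Substituting $\Gamma_{\{e\}}(G)=\Gamma(G)$ on the graph side and $\frac{G}{\{e\}}\cong G$ on the group side yields exactly the asserted equivalence for $\Gamma(G)$.

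I expect no genuine obstacle here, since the entire mathematical content already lives in the Cayley-graph theorem; the only things to verify are the two identifications above, both of which are immediate from the definitions. Should I wish to make the argument self-contained rather than merely invoking that theorem, I would reproduce its short proof in this special case: for the forward direction, a Cayley graph is regular, and since the identity $e$ is adjacent to every other vertex (Proposition~2.1) the graph must then be complete, whence $G$ is cyclic of order $1$ or $p^{m}$ by the complete-graph characterization; for the reverse direction, such a $G$ makes $\Gamma(G)$ complete, and every complete graph $K_{n}$ is a Cayley graph, for instance on $\mathbb{Z}_{n}$ with connection set $\mathbb{Z}_{n}\setminus\{0\}$. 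Either route closes the proof.
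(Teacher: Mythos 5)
Your proposal is correct and takes exactly the paper's route: the paper likewise just sets $H=\{e\}$ and invokes the preceding theorem, relying on the identifications $\Gamma_{\{e\}}(G)=\Gamma(G)$ and $\frac{G}{\{e\}}\cong G$. Your extra remarks (and the optional self-contained argument) only make explicit what the paper leaves implicit.
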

\begin{proof}
We take the normal subgroup $H=\{e\}$. Then the result follows from above theorem.
\end{proof}

\section{Eulerian and Hamiltonian normal subgroup based power graph}
In this section we characterize the finite group $G$ and normal subgroup $H$ such that the the graph $\Gamma_H(G)$ is Eulerian and Hamiltonian.

A graph $\Gamma$ is called Eulerian if it has a closed trail containing all the vertices of $\Gamma$. An useful equivalent characterization of an Eulerian graph is that a graph $\Gamma$ is Eulerian if and only if the vertices of $\Gamma$ is of even degree.
\begin{lemma}
Let $G$ be a finite group. Then for every $v\in G$, degree of $v$ in $\Gamma(G)$ is $\sum \phi(|C_i|)+|v|-1$ and degree of $v$ in $\Gamma^*(G)$ is $\sum \phi(|C_i|)+|v|-2$, where the sum runs over all group $G$ such that $<v>\varsubsetneq C_i$.
\end{lemma}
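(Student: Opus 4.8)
The plan is to compute $\deg(v)$ by pinning down exactly which vertices are adjacent to $v$ and then sorting those neighbours according to how their generated cyclic subgroups sit relative to $\langle v\rangle$ (here I read the sum as being over the cyclic subgroups $C_i$ of $G$ that strictly contain $\langle v\rangle$). The one observation on which everything rests is that in the power graph $u\sim v$ holds precisely when one of $u,v$ is a power of the other, and since $u$ is a power of $v$ exactly when $\langle u\rangle\subseteq\langle v\rangle$, adjacency is equivalent to the cyclic subgroups $\langle u\rangle$ and $\langle v\rangle$ being comparable under inclusion. So the neighbourhood of $v$ in $\Gamma(G)$ is the set of all $u\neq v$ with $\langle u\rangle\subseteq\langle v\rangle$ or $\langle v\rangle\subseteq\langle u\rangle$, and I would split this set into two pieces.

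First I would count the neighbours $u$ with $\langle u\rangle\subseteq\langle v\rangle$. This condition is the same as $u\in\langle v\rangle$, so these neighbours are exactly the elements of the cyclic group $\langle v\rangle$ other than $v$ itself; there are $|v|-1$ of them, which produces the summand $|v|-1$ in the formula. Next I would count the neighbours $u$ with the strict inclusion $\langle v\rangle\subsetneq\langle u\rangle$. Each such $u$ is a generator of the cyclic subgroup $C_i:=\langle u\rangle$, which strictly contains $\langle v\rangle$, and conversely every generator of any cyclic $C_i$ with $\langle v\rangle\subsetneq C_i$ is a neighbour of this kind. Since a cyclic group of order $|C_i|$ has exactly $\phi(|C_i|)$ generators, summing over all admissible $C_i$ gives $\sum\phi(|C_i|)$. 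Adding the two counts yields $\deg_{\Gamma(G)}(v)=\sum\phi(|C_i|)+|v|-1$.

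For the second assertion I would pass to $\Gamma^*(G)=\Gamma(G)\setminus\{e\}$ with $v\neq e$. The identity $e$ belongs to $\langle v\rangle$, hence it was one of the $|v|-1$ neighbours tallied in the first piece; deleting the vertex $e$ therefore removes exactly one edge at $v$ and lowers its degree by one, giving $\deg_{\Gamma^*(G)}(v)=\sum\phi(|C_i|)+|v|-2$.

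The step I expect to require the most care is not any computation but the bookkeeping that the two pieces are disjoint and together exhaust the neighbourhood with no vertex counted twice. Disjointness is exactly why the strict inclusion $\langle v\rangle\subsetneq\langle u\rangle$ must be used in the second case: the remaining generators of $\langle v\rangle$ satisfy $\langle u\rangle=\langle v\rangle$ and are already accounted for among the elements of $\langle v\rangle$ in the first piece, so allowing equality there would double-count them. With the strict inclusion in place, every neighbour falls into precisely one of the two cases, and the degree formula follows.
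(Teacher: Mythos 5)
Your proof is correct and follows essentially the same route as the paper: both characterize the neighbours of $v$ as those $u$ with $u\in\langle v\rangle$ or $\langle v\rangle\varsubsetneq\langle u\rangle$, count the former as $|v|-1$ and the latter by summing $\phi(|C_i|)$ over the cyclic subgroups $C_i$ strictly containing $\langle v\rangle$. Your explicit treatment of the $\Gamma^*(G)$ case (deleting $e$ removes exactly one neighbour) and the disjointness bookkeeping are points the paper leaves implicit, but the argument is the same.
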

\begin{proof}
An element $u\in G$ is adjacent to $v$ in $\Gamma(G)$ if and only if $u\in <v>$ or $<v>\varsubsetneq <u>$. Now there are $|v|-1$ numbers of elements $u\in <v>$ which are adjacent to $v$. Since $<v>\varsubsetneq <u>=C$ if and only if $<v>\varsubsetneq <g>$ for every generators $g$ of $C$, it follows that the total number of vertices $u\in G$ adjacent to $v$ such that $<v>\varsubsetneq <u>$ is $\sum_{C_i \in A}\phi(|C_i|)$, where $A=\{ C_i \subseteq G : <v> \varsubsetneq C_i$ is a cyclic subgroup of $G\}$.
\end{proof}
\begin{theorem}
Let $G$ be a finite group and $H$ be normal subgroup of $G$. Then the graph $\Gamma_H(G)$ is Eulerian if and only if $|G|\equiv|H|$(mod)$2$.
\end{theorem}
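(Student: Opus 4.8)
The plan is to use the standard criterion recalled just above the statement: since $\Gamma_H(G)$ is connected by Proposition 2.1, it is Eulerian if and only if every vertex has even degree. The whole argument therefore reduces to a parity computation of the vertex degrees, and the convenient bookkeeping device is Proposition 2.4, which tells us that adjacency in $\Gamma_H(G)$ is governed by the power graph $\Gamma(\frac{G}{H})$: two vertices in the same coset of $H$ are always adjacent, while two vertices in distinct cosets $a_iH \neq a_jH$ are adjacent exactly when $a_iH \sim a_jH$ in $\Gamma(\frac{G}{H})$. Note also that the vertex set has $1+(|G|-|H|)$ elements, the identity coset contributing only the single vertex $e$ and each of the $[G:H]-1$ non-identity cosets contributing all $|H|$ of its elements.

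First I would compute the degree of each vertex. The vertex $e$ is adjacent to every other vertex, since $a^{|G|}=e$ gives $a^{|G|}H=eH$ for all $a$; hence $\deg(e)=|G|-|H|$. For a vertex $v$ in a non-identity coset $a_iH$, I would count its neighbours in three disjoint groups: the $|H|-1$ other elements of its own coset, the single vertex $e$, and, for every non-identity coset $a_jH$ adjacent to $a_iH$ in $\Gamma(\frac{G}{H})$, all $|H|$ of its elements. Writing $d_i$ for the degree of $a_iH$ in $\Gamma(\frac{G}{H})$ and using that $a_iH$ is always adjacent to the identity coset $eH$ (so $d_i-1$ of its neighbours are non-identity cosets), this collapses to the clean formula $\deg(v)=(|H|-1)+1+|H|(d_i-1)=|H|\,d_i$.

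With these two formulas the equivalence splits cleanly. For the necessity direction it suffices to inspect the single vertex $e$: if $\Gamma_H(G)$ is Eulerian then $\deg(e)=|G|-|H|$ is even, that is $|G|\equiv|H|\pmod 2$. For the converse, assume $|G|\equiv|H|\pmod 2$. Then $\deg(e)$ is even, and it remains to verify that $\deg(v)=|H|\,d_i$ is even for every $v\neq e$. If $|H|$ is even this is immediate. If $|H|$ is odd, then $|G|\equiv|H|\pmod 2$ forces $|G|$ odd, whence $|\frac{G}{H}|=[G:H]$ is odd, and the task becomes to show that every $d_i$ is even.

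The one genuinely non-routine step is this last claim: in a group $K=\frac{G}{H}$ of odd order, every vertex of $\Gamma(K)$ has even degree. I would deduce it from the degree formula of Lemma 4.3, according to which the degree of $w\in K$ in $\Gamma(K)$ equals $\sum \phi(|C_i|)+|w|-1$, the sum running over the cyclic subgroups $C_i$ with $\langle w\rangle \subsetneq C_i$. Since $|K|$ is odd the order $|w|$ is odd, so $|w|-1$ is even; and each such $C_i$ has odd order at least $3$, for which $\phi(|C_i|)$ is even. Hence every $d_i$ is even and $\deg(v)=|H|\,d_i$ is even, which finishes the proof. (This parity fact is precisely the Chelvam--Sattanathan criterion that $\Gamma(K)$ is Eulerian iff $|K|$ is odd, so one could alternatively invoke it directly.)
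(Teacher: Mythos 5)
Your proposal is correct and follows essentially the same route as the paper: both compute $\deg(e)=|G|-|H|$ and $\deg(v)=|H|\cdot d(a_iH)$ for a vertex $v$ in a non-identity coset (the paper writes this as $|H|(\sum\phi(|a_iH|)+o(aH)-1)$ via Lemma 4.1), then split into the cases $|H|$ even and $|H|$ odd, using in the latter case that every vertex of the power graph of an odd-order group has even degree. Your treatment of that last parity step is in fact slightly more careful than the paper's, since you note that both $o(aH)-1$ and each $\phi(|C_i|)$ are even, whereas the paper only mentions the totient terms.
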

Suppose that $|G|\equiv|H|$(mod)$2$. We show that degree of each vertex of the graph $\Gamma_H(G)$ is even. Let $a\in G\setminus H$. Considering $aH$ as a  vertex in $\Gamma(\frac{G}{H})$, by Lemma $4.1$ we have degree of $aH$ is $d(aH)=\sum^n_{i=1}\phi(|(a_iH)|)+|(aH)|-1$. so there are $\sum\phi(|(a_iH)|)+|(a_iH)|-2$ cosets $bH\neq H$ adjacent to $aH$ in $\Gamma(\frac{G}{H})$. Also $aH\sim bH$ in $\Gamma\frac{G}{H}$ implies that $a\sim bh$ for all $h\in H$ and hence these $\sum\phi(|(a_iH)|)+|(aH)|-2$ non-identity adjacent cosets of $H$ contributes $|H|(\sum\phi(|(a_iH)|)+|(aH)|-2)$ to the degree of $a$ in $\Gamma_H(G)$. Since $V(\Gamma_H(G)=G\setminus H\bigcup\{e\}$, the fact $eH\sim aH$ in $\Gamma(\frac{G}{H})$ contributes only $1$ to the degree of $a$ in $\Gamma_H(G)$ for $a\sim e$. Thus we have the degree of $a$  in $\Gamma_H(G)$ is
\begin{center}
$|H|(\sum^n_{i=1}\phi(|(a_iH)|)+|(aH)|-2)+1+(|H|-1)$
=$|H|(\sum^n_{i=1}\phi(|(a_iH)|)+|(aH)|-1)$.
\end{center}
Suppose $|H|$ is even. Then $d(a)$ is even. let $|H|$ is odd. Then $|G|\equiv|H|$(mod)$2$ implies that $|G|$ is odd. Then $|\frac{G}{H}|$ is odd, this implies that $\phi(|a_iH|)$ is even, for all $a_iH \in \frac{G}{H}$ expect $H$ the identity of $\frac{G}{H}$. So $d(a)$ is even. Again $d(e)=|G|-|H|$ is even by the given condition. Hence the degree of each vertex of the graph $\Gamma_H(G)$ is even.
Conversely let the graph $\Gamma_H(G)$ is Eulerian. Then the degree of $e=|G|-|H|$ is even. Hence $|G|\equiv|H|$(mod)$2$.

\begin{center}
  \includegraphics[width=12in]{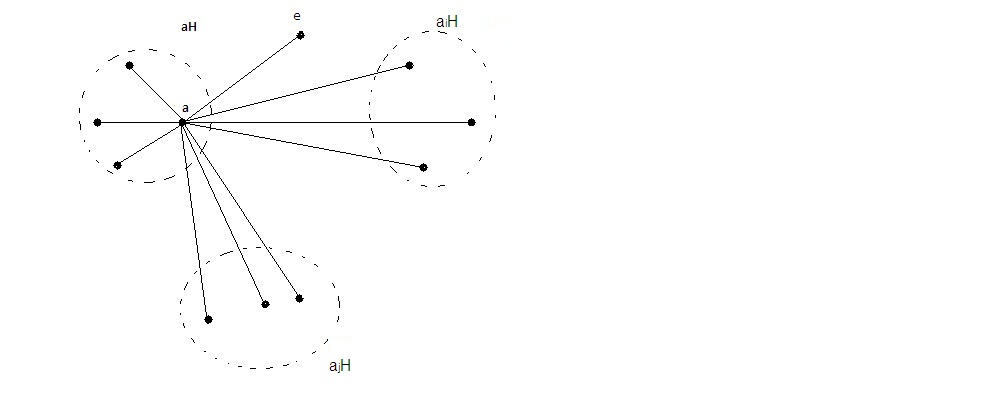}.
 \end{center}
 As an immediate consequence, we have the following characterization of the Eulerian power graphs. This result was proved by Chelvam and Sattanathan. [Theorem 6,\cite{Chelvam} ].
 \begin{corollary}
 The power graph $\Gamma(G)$ of the group $G$ is Eulerian if and only if $|G|$ is odd.
 \end{corollary}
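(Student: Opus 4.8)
The plan is to obtain this corollary as a direct specialization of Theorem $4.2$ to the trivial normal subgroup $H=\{e\}$. First I would observe that $\{e\}$ is a normal subgroup of every finite group $G$, so Theorem $4.2$ applies to the pair $(G,\{e\})$. Next I would recall the identification, already noted in the introduction, that $\Gamma_{\{e\}}(G)=\Gamma(G)$. This identification is immediate from the definitions: the vertex set $(G\setminus\{e\})\cup\{e\}$ equals $G$, and the adjacency condition $aH=b^mH$ or $bH=a^nH$ collapses, when $H=\{e\}$, to $a=b^m$ or $b=a^n$, which is precisely the edge relation of the power graph $\Gamma(G)$.

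With this identification in place, I would simply invoke Theorem $4.2$, which asserts that $\Gamma_H(G)$ is Eulerian if and only if $|G|\equiv|H|\pmod{2}$. Substituting $H=\{e\}$, so that $|H|=1$, the criterion becomes $|G|\equiv 1\pmod{2}$, that is, $|G|$ is odd. Combining this with $\Gamma_{\{e\}}(G)=\Gamma(G)$ yields exactly the statement that $\Gamma(G)$ is Eulerian if and only if $|G|$ is odd.

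I expect no genuine obstacle here, since the argument is a one-line consequence of the general theorem. The only point meriting a moment's care is confirming that the quotient construction degenerates correctly, namely that $\frac{G}{\{e\}}\cong G$, so that the power graph $\Gamma(\frac{G}{H})$ used implicitly in the proof of Theorem $4.2$ coincides with $\Gamma(G)$ itself. Once this is checked, the corollary follows at once.
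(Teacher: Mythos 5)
Your proposal is correct and matches the paper's intent exactly: the paper presents this corollary as an immediate consequence of Theorem $4.2$ by taking $H=\{e\}$, using the identification $\Gamma_{\{e\}}(G)=\Gamma(G)$ already noted in the introduction, so that the condition $|G|\equiv|H|\pmod{2}$ reduces to $|G|$ being odd. Nothing further is needed.
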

 A graph $\Gamma$ is called Hamiltonian if it has  a cycle that meets every vertex. Such a cycle is called a Hamiltonian cycle.
\begin{theorem}
Let $G$ be a finite group and $H$ be a normal subgroup.  Then $\Gamma_H(G)$ is Hamiltonian  if  the power graph $\Gamma(\frac{G}{H})$ is Hamiltonian.
\end{theorem}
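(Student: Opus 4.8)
The plan is to realize $\Gamma_H(G)$ as a \emph{blow-up} of the power graph $\Gamma(\frac{G}{H})$ and then lift a Hamiltonian cycle of the quotient to one of $\Gamma_H(G)$. Three structural facts from Section~2 drive everything: each coset $aH$ with $a\notin H$ induces a clique of size $|H|$ in $\Gamma_H(G)$; any two cosets that meet in a single edge are in fact completely joined; and two distinct cosets $aH,bH$ are joined in $\Gamma_H(G)$ precisely when $aH\sim bH$ in $\Gamma(\frac{G}{H})$. The only asymmetry to keep in mind is that the identity coset $H$ contributes just the single vertex $e$ to $\Gamma_H(G)$, while every other coset contributes all of its $|H|$ elements. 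So $\Gamma_H(G)$ is obtained from $\Gamma(\frac{G}{H})$ by expanding each non-identity vertex into a clique, contracting the identity vertex to $e$, and turning each quotient-edge into a complete join.

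Assume $\Gamma(\frac{G}{H})$ is Hamiltonian; then $k:=[G:H]\ge 3$. Write $m=|H|$ and fix a Hamiltonian cycle of $\Gamma(\frac{G}{H})$, relabelling the cosets so that it reads $H=C_0\sim C_1\sim\cdots\sim C_{k-1}\sim C_0$. For each $i$ with $1\le i\le k-1$ I enumerate the $m$ elements of $C_i$ as $c_{i,1},c_{i,2},\ldots,c_{i,m}$, and I then form the closed walk
\[ e,\ c_{1,1},\ldots,c_{1,m},\ c_{2,1},\ldots,c_{2,m},\ \ldots,\ c_{k-1,1},\ldots,c_{k-1,m},\ e, \]
which traverses each non-identity coset $C_i$ as a single block (in the listed order) and uses $e$ as the stand-in for the contracted coset $C_0=H$.

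It remains to verify that every consecutive pair in this walk is an edge of $\Gamma_H(G)$ and that each vertex occurs once. The within-block edges $c_{i,j}\sim c_{i,j+1}$ are present because $C_i$ is a clique. The block-to-block edges $c_{i,m}\sim c_{i+1,1}$ (for $1\le i\le k-2$) are present because $C_i\sim C_{i+1}$ in $\Gamma(\frac{G}{H})$, whence the two cosets are completely joined in $\Gamma_H(G)$. Finally, the opening edge $e\sim c_{1,1}$ and the closing edge $c_{k-1,m}\sim e$ hold because $H=C_0$ is adjacent on the cycle to both $C_1$ and $C_{k-1}$, so by the quotient adjacency criterion $e$ is joined to every element of $C_1$ and of $C_{k-1}$. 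Since the walk lists $e$ exactly once and lists every element of every non-identity coset exactly once, it visits each vertex of $V(\Gamma_H(G))=(G\setminus H)\cup\{e\}$ exactly once and returns to its start, so it is a Hamiltonian cycle.

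I expect no serious structural obstacle here: once the coset-clique, complete-join, and quotient-adjacency propositions are in hand, the construction is essentially forced. The single point deserving care is the special role of the identity coset, which supplies only the vertex $e$ rather than a full clique; one must check that $e$ can serve as the contracted image of $C_0$ and that the cycle closes through it correctly, which is exactly what the quotient adjacency criterion guarantees. I would also flag explicitly that only the stated implication is being proved: the converse is \emph{not} claimed, since for $|H|\ge 2$ the blow-up can be Hamiltonian even when $\Gamma(\frac{G}{H})$ is not.
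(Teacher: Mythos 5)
Your proposal is correct and follows essentially the same route as the paper: both lift a Hamiltonian cycle of $\Gamma(\frac{G}{H})$ by traversing each non-identity coset as a clique block, using the complete join between adjacent cosets, with $e$ serving as the contracted identity coset. Your write-up is somewhat more careful about the special role of $e$ and about noting that only one direction is claimed, but the construction is identical.
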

\begin{proof}
Let the power graph $\Gamma(\frac{G}{H})$ is Hamiltonian. Suppose $H=\{ h_1, h_2, \cdots, h_k\}$. Since $\Gamma(\frac{G}{H})$ is Hamiltonian, there exists a Hamiltonian cycle $T=H\sim a_1H\sim a_2H\sim \cdots \sim a_nH\sim H$. Since $a_iH \sim a_jH$ in $\Gamma(\frac{G}{H})$ implies that $a_ih_l \sim a_jh_m$ in $\Gamma_H(G)$ for all $l, m=1, 2, \cdots, k$ and every coset $a_iH$ formes a clique in $\Gamma_H(G)$, so we can construct a Hamiltonian cycle in $\Gamma_H(G)$ as follows:
$$ e\sim a_1h_1 \sim a_1h_2 \sim \cdots \sim a_1h_k \sim a_2h_1 \sim a_2h_2 \sim \cdots \sim a_2h_k \sim \cdots \sim a_nh_1\sim a_nh_2 \sim \cdots \sim a_nh_k \sim e$$.
\end{proof}
In \cite{sen}, Chakrabarty, Ghosh and Sen proved that the power graph $\Gamma(G)$ is Hamiltonian  for every finite cyclic group $G$ of order $\geq 3$. So by the above theorem it follows immediately that:
\begin{corollary}
Let $G$ be a finite cyclic group and $H$ be a normal subgroup such that $[G:H]\geq 3$. Then the graph $\Gamma_H(G)$ is Hamiltonian.
\end{corollary}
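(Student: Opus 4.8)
The plan is to recognize this as a direct application of Theorem $4.3$ together with the cited Hamiltonicity result for power graphs of cyclic groups, so the only real work is to verify that the hypothesis of Theorem $4.3$ is satisfied. Concretely, I would first argue that $\frac{G}{H}$ is itself a finite cyclic group: since $G$ is cyclic and $H$ is a (necessarily normal) subgroup, any generator $g$ of $G$ maps to a coset $gH$ that generates $\frac{G}{H}$, because every coset has the form $g^kH=(gH)^k$. Thus $\frac{G}{H}=\langle gH\rangle$ is cyclic.

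Next I would pin down the order of this quotient. By Lagrange's theorem the order of $\frac{G}{H}$ equals the index $[G:H]$, and the hypothesis gives $[G:H]\geq 3$. Hence $\frac{G}{H}$ is a finite cyclic group of order at least $3$.

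At this point the cited result of Chakrabarty, Ghosh and Sen from \cite{sen} applies directly: the power graph $\Gamma(G')$ of any finite cyclic group $G'$ of order $\geq 3$ is Hamiltonian. Taking $G'=\frac{G}{H}$, we conclude that $\Gamma(\frac{G}{H})$ is Hamiltonian. Feeding this into Theorem $4.3$, which asserts that $\Gamma_H(G)$ is Hamiltonian whenever $\Gamma(\frac{G}{H})$ is Hamiltonian, yields the desired conclusion that $\Gamma_H(G)$ is Hamiltonian.

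I do not anticipate a genuine obstacle here, since every ingredient is already available; the statement is essentially a packaging of Theorem $4.3$ with a standard structural fact. The only point requiring a moment's care is the verification that passing to the quotient preserves cyclicity and that the index condition $[G:H]\geq 3$ translates exactly into the order hypothesis needed to invoke the result from \cite{sen}; once that bookkeeping is in place, the corollary follows immediately.
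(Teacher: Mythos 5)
Your proposal is correct and follows exactly the paper's route: the paper derives this corollary immediately from the Hamiltonicity theorem for $\Gamma_H(G)$ together with the cited result of Chakrabarty, Ghosh and Sen that $\Gamma(G')$ is Hamiltonian for every finite cyclic group $G'$ of order at least $3$. Your added verification that $\frac{G}{H}$ is cyclic of order $[G:H]\geq 3$ is the same (left implicit) bookkeeping the paper relies on.
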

\begin{center}
  \includegraphics[width=12in]{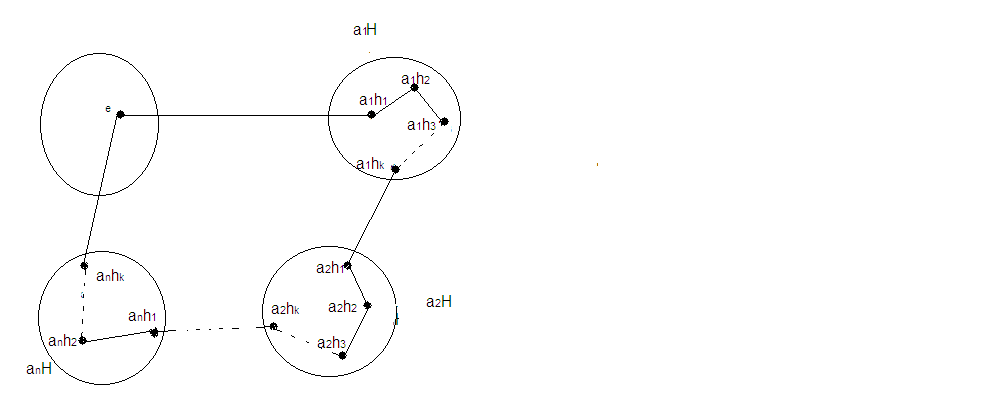}.
 \end{center}
\section{Planar normal subgroup based power graph }
In this section, we characterize the planarity of the normal subgroup based power graphs.

First we show that a normal subgroup based power graph can neither be a tree nor a bipartite graph. Recall that a graph $\Gamma$ is called a tree if it has no cycle. A bipartite graph $\Gamma$ such that the vertex set $V(\Gamma)$ of $\Gamma$ is a disjoin union $V_1\bigcup V_2$, where no two vertices in $V_i$ are adjacent, for $i=1, 2$.
\begin{theorem}
Let $G$ be a finite group  and $H$ be a nontrivial proper normal subgroup.  Then the normal subgroup based  power graph $\Gamma_H(G)$ contains atleast one $3-$ cycle.
\end{theorem}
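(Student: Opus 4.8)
The plan is to exhibit an explicit $3$-cycle using the two structural facts already established: that the identity $e$ is adjacent to every other vertex (Proposition 2.1), and that the elements of any single coset of $H$ form a clique (Proposition 2.2). Since $e$ is a universal vertex, it suffices to find a single edge between two vertices \emph{other than} $e$; together with $e$ this edge closes up into a triangle.

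First I would use that $H$ is a proper subgroup to pick an element $a \in G \setminus H$, so that the coset $aH$ is distinct from $H$ and is contained entirely in the vertex set $(G\setminus H)\cup\{e\}$. Next, because $H$ is nontrivial we have $|H|\geq 2$, so there is some $h\in H$ with $h\neq e$; this produces a second element $ah$ of the coset $aH$. Both $a$ and $ah$ lie in $G\setminus H$, hence are vertices of $\Gamma_H(G)$, and they are distinct because $h\neq e$.

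The edges are then immediate. By Proposition 2.2 the coset $aH$ forms a clique, so $a\sim ah$. By Proposition 2.1 every vertex is adjacent to $e$, so $e\sim a$ and $e\sim ah$. The only thing left to check is that the three vertices $e, a, ah$ are pairwise distinct: $a\neq e$ and $ah\neq e$ since $a, ah\in G\setminus H$ while $e\in H$, and $a\neq ah$ since $h\neq e$. Hence $e\sim a\sim ah\sim e$ is a $3$-cycle in $\Gamma_H(G)$.

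There is essentially no hard step here; the result follows directly from the two already-proved propositions. The only point requiring a little care is the bookkeeping that the three chosen vertices genuinely lie in the vertex set and are pairwise distinct---in particular that $ah$ avoids both $H$ and the identity---which is exactly where the two hypotheses are used: nontriviality ($|H|\geq 2$) supplies the element $h\neq e$, while properness supplies a coset $aH\neq H$.
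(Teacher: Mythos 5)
Your proof is correct and follows essentially the same route as the paper's: both take two distinct elements of a coset $aH$ with $a\notin H$ (which exist since $|H|\geq 2$), note they are adjacent because the coset is a clique, and close the triangle through the universal vertex $e$. Your version merely spells out the distinctness bookkeeping more explicitly than the paper does.
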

\begin{proof}
Consider $a\in G\setminus H$. Then $aH\neq H$. Since $H$ is nontrivial, so $|aH|\geq 2$. Let $b, c \in aH$. Then $b$ and $c$ are adjacent and hence $b, c, e$ forms a $3-$cycle in $\Gamma_H(G)$.

Note that though, in the above theorem , we have not assumed  $|G|\geq 3$, yet it follows, otherwise $G$ would have no  nontrivial proper normal subgroups.

Also $H$ is considered to be a nontrivial subgroup, just to avoid the case:  every element of $G$ is of order $2$ and $H=\{e\}$. Chelvam et al. \cite{Chelvam} proved that in this case $\Gamma_H(G)=\Gamma(H)$ is a tree. Even if every element of $G$ is its own inverse, then also $\Gamma_H(G)$ contains a $3-$ cycle for nontrivial proper subgroup $H$ of $G$.
\end{proof}
Since a bipartite graph can not have an odd cycle, we have:
\begin{corollary}
If $G$ is a finite group with a nontrivial  proper normal subgroup $H$, then the normal subgroup based power graph  $\Gamma_H(G)$ is neither bipartite nor tree.
\end{corollary}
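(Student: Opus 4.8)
The plan is to obtain both halves of the statement as immediate consequences of Theorem $5.1$, which asserts that for a nontrivial proper normal subgroup $H$ the graph $\Gamma_H(G)$ contains at least one $3$-cycle. No fresh construction is needed; the whole content of the corollary is that a single odd cycle already forbids both the bipartite and the tree structure. So I would first fix, as supplied by that theorem, three pairwise-adjacent vertices, say $b, c, e$ with $b, c$ lying in a common nontrivial coset $aH$ and $e$ adjacent to both by Proposition $2.1$.

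For the non-bipartiteness, I would invoke the standard characterization that a graph is bipartite precisely when it has no odd cycle. Since the triangle $b \sim c \sim e \sim b$ is an odd cycle, it cannot be two-colored: in any partition $V(\Gamma_H(G)) = V_1 \cup V_2$ into independent sets, two of the three vertices $b, c, e$ would fall in the same class while remaining adjacent, contradicting independence. Hence $\Gamma_H(G)$ is not bipartite. For the tree part, I would use that a tree is by definition acyclic; the very same triangle is a cycle, so $\Gamma_H(G)$ cannot be a tree. Since by Proposition $2.1$ the graph is connected, acyclicity is in fact the only defining property that fails.

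I do not expect any genuine obstacle, because once Theorem $5.1$ produces one odd cycle the rest is purely definitional. The only point deserving care is the role of the hypothesis that $H$ be a nontrivial proper subgroup: this is exactly what Theorem $5.1$ requires, and it rules out the degenerate situation noted there—when $H = \{e\}$ and every element of $G$ has order $2$, the graph $\Gamma_H(G) = \Gamma(G)$ really is a tree, so the hypothesis cannot be dropped.
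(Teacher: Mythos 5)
Your proposal is correct and follows exactly the paper's own route: the paper derives this corollary directly from Theorem $5.1$ via the one-line observation that a bipartite graph cannot contain an odd cycle (and a tree no cycle at all). Your additional remarks on the necessity of the hypothesis match the note already included in the proof of Theorem $5.1$.
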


The girth of a graph $\Gamma$, denoted by gr$(\Gamma)$, is the length of the shortest cycle in $\Gamma$ and  gr$(\Gamma)$ is infinite if $\Gamma$ has no cycles. Hence we have:
\begin{corollary}
Let $G$ be a  finite group with a nontrivial proper normal subgroup $H$. Then the girth of the normal subgroup based power graph $\Gamma_H(G)$ is $3$.
\end{corollary}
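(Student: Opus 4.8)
The plan is to read this corollary off directly from the preceding theorem, which already supplies the crucial cycle. Recall that $\mathrm{gr}(\Gamma_H(G))$ is by definition the length of a shortest cycle in $\Gamma_H(G)$, so proving that the girth equals $3$ amounts to establishing the two inequalities $\mathrm{gr}(\Gamma_H(G)) \leq 3$ and $\mathrm{gr}(\Gamma_H(G)) \geq 3$.

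First I would dispatch the upper bound. By the preceding theorem, under the standing hypothesis that $H$ is a nontrivial proper normal subgroup of $G$, the graph $\Gamma_H(G)$ contains at least one $3$-cycle; concretely, choosing $a \in G \setminus H$ and two distinct elements $b, c \in aH$ (which exist since $|aH| = |H| \geq 2$), the triple $b, c, e$ forms a triangle, because $b$ and $c$ lie in a common coset and each is adjacent to the identity $e$. The mere existence of a cycle of length $3$ gives $\mathrm{gr}(\Gamma_H(G)) \leq 3$ at once.

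For the lower bound I would invoke that $\Gamma_H(G)$ is a simple undirected graph: it has no loops, since no vertex is joined to itself, and no multiple edges. Consequently it admits no cycle of length $1$ or $2$, so every cycle has length at least $3$, forcing $\mathrm{gr}(\Gamma_H(G)) \geq 3$. Combining the two bounds yields $\mathrm{gr}(\Gamma_H(G)) = 3$. I do not anticipate any genuine obstacle here: all the substance has been deferred to the preceding theorem, and the only extra ingredient is the elementary fact that a simple graph cannot have girth smaller than $3$.
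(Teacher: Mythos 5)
Your proposal is correct and matches the paper's (implicit) argument exactly: the paper derives this corollary directly from the preceding theorem's $3$-cycle, with the lower bound $\mathrm{gr}(\Gamma_H(G))\geq 3$ being the same elementary observation that a simple graph has no cycles of length $1$ or $2$.
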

  A graph $\Gamma$ is called  planer if it can be drawn in a plane so that  no two edges intersect. A graph is planer if and only if it does not contain a graph which is isomorphic to either of the graphs $K_{3, 3}$ and $K_5$. In the following we characterize planar normal subgroup based power graphs. Here $|aH|$ denote the number of elements in the coset $aH$ and $o(aH)$ means the order of the element $aH$ in the quotient group $\frac{G}{H}$.
\begin{theorem}
Let $G$ be a finite group and $H$ be a nontrivial proper normal subgroup of $G$. Then the normal subgroup based power graph  $\Gamma_H(G)$ is planar if and only if $|H|=2$ or $3$ and $\frac{G}{H}\cong \mathbb{Z}_2\times \mathbb{Z}_2 \times \cdots \times \mathbb{Z}_2$.
\end{theorem}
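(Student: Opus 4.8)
The plan is to exploit the description of $\Gamma_H(G)$ as a ``blow-up'' of the power graph $\Gamma(\frac{G}{H})$. By the earlier propositions, each non-identity coset $aH$ induces a clique on its $|H|$ elements; whenever $aH\sim bH$ in $\Gamma(\frac{G}{H})$ every element of $aH$ is joined to every element of $bH$; and $a\sim b$ in $\Gamma_H(G)$ precisely when $aH=bH$ or $aH\sim bH$ in $\Gamma(\frac{G}{H})$. Finally $e$ is adjacent to every other vertex (Proposition 2.1). Writing $m=|H|$, two consequences are immediate and drive the whole argument: the set $\{e\}\cup aH$ induces a copy of $K_{m+1}$, and if $aH\sim bH$ in $\Gamma(\frac{G}{H})$ then $\{e\}\cup aH\cup bH$ induces a copy of $K_{2m+1}$. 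I would invoke Kuratowski's criterion (planar iff no subdivision of $K_5$ or $K_{3,3}$); here plain $K_5$-subgraphs suffice to witness the obstructions.

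For necessity, suppose $\Gamma_H(G)$ is planar. The subgraph $K_{m+1}$ on $\{e\}\cup aH$ forces $m+1\le 4$, so $m=|H|\in\{2,3\}$ (recall $H$ is nontrivial, so $m\ge 2$). Next, since $m\ge 2$, the subgraph $K_{2m+1}$ arising from any pair of adjacent non-identity cosets contains $K_5$; hence planarity forbids any edge of $\Gamma(\frac{G}{H})$ between two non-identity vertices. Thus $\Gamma(\frac{G}{H})$ must be a star centred at the identity coset $H$. I would then argue that a finite group whose power graph is a star must be an elementary abelian $2$-group: if some coset had order $r\ge 3$ in $\frac{G}{H}$, say $o(aH)=r$, then $aH$ and $a^2H$ would be distinct non-identity cosets with $a^2H\in\langle aH\rangle$, giving a forbidden edge. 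Hence every non-identity element of $\frac{G}{H}$ has order $2$, so $\frac{G}{H}\cong\mathbb{Z}_2\times\cdots\times\mathbb{Z}_2$.

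For sufficiency, assume $m=|H|\in\{2,3\}$ and $\frac{G}{H}\cong\mathbb{Z}_2\times\cdots\times\mathbb{Z}_2$. Then every non-identity element of $\frac{G}{H}$ has order $2$, so $\Gamma(\frac{G}{H})$ is a star and no two non-identity cosets are adjacent. By the coset--power-graph correspondence, the only adjacencies between distinct cosets of $\Gamma_H(G)$ run through $e$; concretely, $\Gamma_H(G)$ is obtained by taking the $[\,G:H\,]-1$ cliques $\{e\}\cup a_iH\cong K_{m+1}$ and identifying their common vertex $e$. Since $m+1\le 4$, each block $K_{m+1}$ is $K_3$ or $K_4$, hence planar; and a graph all of whose blocks (here the $K_{m+1}$'s, which meet only at the cut-vertex $e$) are planar is itself planar. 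Therefore $\Gamma_H(G)$ is planar, completing the equivalence.

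The routine parts are the two induced-subgraph identifications and the block-wise planar embedding. The one genuinely delicate step is the necessity argument that collapses $\Gamma(\frac{G}{H})$ to a star: one must check that a single adjacent pair of non-identity cosets already produces $K_5$ (which needs $|H|\ge 2$, exactly the nontriviality hypothesis) and that a ``star power graph'' forces an \emph{elementary abelian} $2$-group rather than merely an abelian or a $p$-group. Care is also needed in the degenerate case $\frac{G}{H}\cong\mathbb{Z}_2$ (a single non-identity coset), where $\Gamma_H(G)=K_{m+1}$ is trivially planar and consistent with the statement.
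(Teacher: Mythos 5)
Your proposal is correct and follows essentially the same route as the paper: both directions hinge on the same two $K_5$ obstructions (a coset of size $\ge 4$ together with $e$, and a pair of adjacent non-identity cosets together with $e$), and the same reduction of the quotient's power graph to a star, which forces every non-identity element of $\frac{G}{H}$ to have order $2$. The one place where you genuinely improve on the paper is the sufficiency step: the paper concludes planarity merely from the absence of $K_5$ and $K_{3,3}$ \emph{subgraphs}, which is not by itself a valid planarity criterion (Kuratowski's theorem excludes subdivisions, not just subgraphs), whereas your observation that $\Gamma_H(G)$ decomposes into blocks $K_{m+1}$ with $m+1\le 4$ meeting only at the cut vertex $e$ yields an actual planar embedding and closes that gap cleanly.
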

\begin{proof}
Suppose that the graph $\Gamma_H(G)$ is planar. Let $|H|>3$. Consider  $a\in G\setminus H$.  Then $|aH|\geq 4$.  Now four vertices of the graph $\Gamma_H(G)$ in the coset $aH$ and the identity $e$ of the group $G$ forms a subgraph which is isomorphic to  $K_5$. So the graph $\Gamma_H(G)$ is not planar. Thus $|H|=2$ or $3$.

Then the proof of the necessary part will be completed if we can show order of every nonidentity  element in the quotient group is $2$. If possible on the contrary, assume that $aH \in \frac{G}{H}$ is such that $o(aH)\geq 2$. Then $a^2H \neq H$ implies that $a^2$ does not belongs to $H$ and $a^2$ is adjacent with $a$ in $\Gamma_H(G)$. Also for $h_1, h_2 \in H $, $ah_1, ah_2, a^2h_1, a^2h_2, e$ forms a subgraph of $\Gamma_H(G)$ isomorphic to $K_5$, by Proposition $2.4$. This contradicts that $\Gamma_H(G)$ is planar. Hence order of every non-identity element of $\frac{G}{H}$ is $2$.

Conversely,  $\frac{G}{H}\cong \mathbb{Z}_2\times \mathbb{Z}_2 \times \cdots \times \mathbb{Z}_2$ implies that order of every non-identity element of $\frac{G}{H}$ is $2$.  Hence if for $a, b \in G \setminus H$, $aH$ and $bH$ are non-identity elements of $\frac{G}{H}$ then $a$ and $b$ are not adjacent in $\Gamma_H(G)$. Also note that the elements of $G\setminus H$ in a coset $aH$ formes a clique. Since $|aH|=2$ or $3$ for every $a\in G\setminus H$, it follows that $\Gamma_H(G)$ has no subgraph isomorphic to $K_5$ or $K_{3, 3}$.
\end{proof}
\begin{center}
  \includegraphics[width=12in]{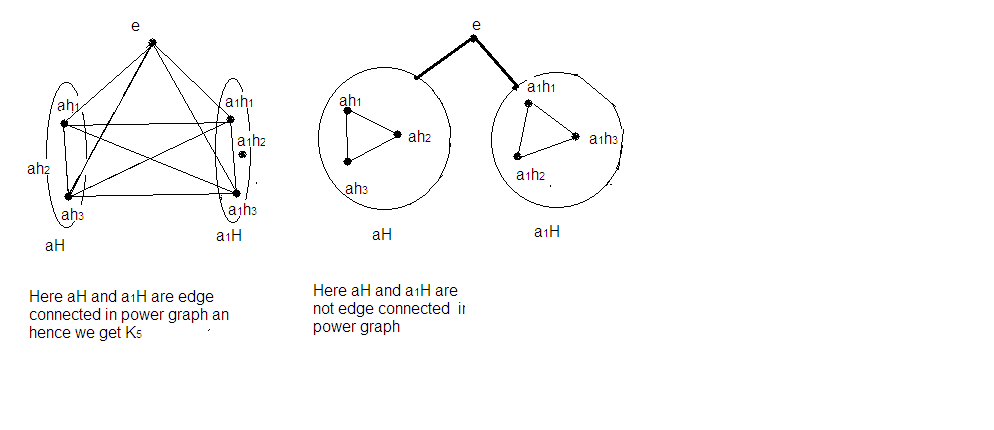}.
 \end{center}

\section{Some parameters of normal subgroup based power graph }
In this section we give the edge number and girth of the normal subgraph based power graph $\Gamma_H(G)$ of any finite group $G$ having a nontrivial normal subgroup $H$.  Also  the clique number and the chromatic number of the normal subgraph based power graph $\Gamma_H(G)$ have been found for $G$ is a finite cyclic group.  In \cite{sen} Chakrabarty et. al proved that if $G$ is a finite group of $n$ elements then total number of edges in $\Gamma(G)$ is $\frac{1}{2}\sum_{a\in G}(2o(a)-\phi(o(a))-1)$. Thus if $H$ is a normal subgroup of $G$ and $[G:H]=n$. Then the total number of edges in the power graph $\Gamma_H(G)$ is $\frac{1}{2}\sum_{aH\in \frac{G}{H}}(2|aH|-\phi(|aH|)-1)$.

\begin{theorem}
Let $G$ be a finite group and $H$ be a nontrivial normal subgroup of $G$ such that $|\frac{G}{H}|=n$. Then the number of edges of the graph $\Gamma_H(G)$ is $(t-n+1)|H|^2+\left(
                                 \begin{array}{c}
                                   |H| \\
                                   2   \\
                                 \end{array}
                               \right)(\frac{|G|}{|H|}-1)+(|G|-|H|)$, where $t=\frac{1}{2}\sum_{aH\in \frac{G}{H}}(2o(aH)-\phi(o(aH))-1)$.

\end{theorem}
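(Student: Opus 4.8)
The plan is to count the edges of $\Gamma_H(G)$ by exploiting the coset structure, viewing $\Gamma_H(G)$ as a ``blow-up'' of the power graph $\Gamma(\frac{G}{H})$ in which each non-identity coset $aH$ is expanded into a clique on $|H|$ vertices while the identity coset $H$ is represented by the single vertex $e$. Concretely, I would partition the edge set of $\Gamma_H(G)$ into three mutually disjoint and exhaustive classes: (i) edges joining two vertices lying in the same non-identity coset; (ii) edges joining vertices belonging to two distinct non-identity cosets; and (iii) edges incident to $e$. Because $e$ is the only vertex coming from the coset $H$, no edge can have both endpoints in $H$, so every edge falls into exactly one class, and the total number of edges is the sum of the three subtotals.

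I would dispose of the two easy classes first. For class (i), Proposition $2.2$ says each of the $n-1$ non-identity cosets forms a clique isomorphic to $K_{|H|}$, so this class contributes $(n-1)\binom{|H|}{2}=\binom{|H|}{2}\left(\frac{|G|}{|H|}-1\right)$ edges. For class (iii), Proposition $2.1$ shows that $e$ is adjacent to every one of the remaining $|G|-|H|$ vertices, contributing exactly $|G|-|H|$ edges. For class (ii), Proposition $2.4$ shows that two distinct non-identity cosets are either completely joined, yielding a copy of $K_{|H|,|H|}$ and hence $|H|^2$ edges, or else joined by no edge at all; moreover, by the adjacency correspondence of Proposition $2.6$, the completely-joined case occurs precisely when $a_iH\sim a_jH$ in $\Gamma(\frac{G}{H})$. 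Thus the class (ii) subtotal equals $|H|^2$ times the number of edges of $\Gamma(\frac{G}{H})$ that join two \emph{non-identity} cosets.

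The crux of the argument, and the step requiring the most care, is counting those edges of $\Gamma(\frac{G}{H})$ that avoid the identity coset $H$. Here I would invoke the edge-count formula for power graphs recalled immediately before the theorem, applied to the quotient $\frac{G}{H}$ of order $n$, which gives that $t=\frac{1}{2}\sum_{aH}(2o(aH)-\phi(o(aH))-1)$ is precisely the total number of edges of $\Gamma(\frac{G}{H})$. Since $H$ is the identity of $\frac{G}{H}$, it is adjacent in $\Gamma(\frac{G}{H})$ to all $n-1$ other cosets, so exactly $n-1$ of these $t$ edges are incident to $H$; these are the edges that correspond to the class (iii) edges at $e$ already accounted for, and removing them leaves $t-(n-1)=t-n+1$ edges among non-identity cosets. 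Hence class (ii) contributes $(t-n+1)|H|^2$ edges, and summing the three subtotals produces exactly $(t-n+1)|H|^2+\binom{|H|}{2}\left(\frac{|G|}{|H|}-1\right)+(|G|-|H|)$. The only genuine obstacle is the bookkeeping surrounding the identity coset, which is expanded to a single vertex rather than to a full clique of size $|H|$; once this asymmetry is tracked correctly, so that the $n-1$ identity-incident edges of $\Gamma(\frac{G}{H})$ are subtracted and re-attributed to class (iii), the remainder of the computation is routine.
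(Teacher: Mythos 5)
Your proposal is correct and follows essentially the same route as the paper: decompose the edges into intra-coset clique edges, inter-coset edges counted as $|H|^2$ per edge of $\Gamma(\frac{G}{H})$ avoiding the identity coset, and the $|G|-|H|$ edges at $e$, then subtract the $n-1$ identity-incident edges from $t$. Your version is in fact slightly more careful than the paper's in making the three classes explicitly disjoint and in tracking the collapse of the coset $H$ to the single vertex $e$, but the underlying argument is identical.
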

\begin{proof}
Let $a_1H, a_2H, \cdots, a_{n-1}H, H $ be the all distinct coset of $H$ in $G$. Then $a_1H \bigcup a_2H \bigcup\cdots \bigcup a_{n-1}H \bigcup\{e\}$ is the  set of all vertices of $\Gamma_H(G)$.  If $a_iH$ and $a_jH$  are adjacent in the power graph $\Gamma(\frac{G}{H})$, then every element of $a_iH$ is adjacent with every element of $a_jH$ in the normal  subgroup based power graph $\Gamma_H(G)$. Thus each edge in $\Gamma(\frac{G}{H})$ between any two vertices of $a_1H, a_2H, \cdots, a_{n-1}H$ induces $|H|^2$ edges in $\Gamma_H(G)$. Suppose there are $t$ edges in the power  graph $\Gamma_H(G)$.  The identity element $H$ of $\frac{G}{H}$ is adjacent to each of the $n-1$ non-identity elements  $a_1H, a_2H, \cdots, a_{n-1}H$ in the power graph  $\Gamma(\frac{G}{H})$. Now by Proposition $2.2$, the vertices of $\Gamma_H(G)$ in each $a_iH$ form a clique and hence there are $\left(
                                                                  \begin{array}{c}
                                                                    |H| \\
                                                                    2 \\
                                                                  \end{array}
                                                                \right)(n-1)
$ edges joining two vertices lying in a coset. Furthermore there are $|G|-|H|$ edges incident with $e$ in $\Gamma_H(G)$. Thus total number of edges in $\Gamma_H(G)$ is   $(t-n+1)|H|^2+\left(
                                 \begin{array}{c}
                                   |H| \\
                                   2   \\
                                 \end{array}
                               \right)(\frac{|G|}{|H|}-1)+(|G|-|H|)$.

\end{proof}
Now by Corollary $4.3$ of \cite {sen}, the following corollary is immediate.
\begin{corollary}
Let $G$ be a finite cyclic group having a  nontrivial subgroup $H$ such that $\frac{|G|}{|H|}=n$. Then the number of edges of the graph $\Gamma_H(G)$ is

$[\frac{1}{2}\sum_{d|n}\{2d-\phi(n)-1\}\phi(d)-n+1]|H|^2+\left(
                                                          \begin{array}{c}
                                                            |H| \\
                                                            2 \\
                                                          \end{array}
                                                        \right)(\frac{|G|}{|H|}-1)+(|G|-|H|).
$
\end{corollary}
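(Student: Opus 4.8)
The plan is to derive this formula as a direct specialization of Theorem 6.1, the only additional ingredient being an explicit evaluation of the quantity $t$ in the cyclic case. First I would note that a quotient of a cyclic group is again cyclic, so here $\frac{G}{H}$ is cyclic of order $n = \frac{|G|}{|H|}$. Consequently Theorem 6.1 applies without change, and it remains only to rewrite $t = \frac{1}{2}\sum_{aH \in \frac{G}{H}}(2 o(aH) - \phi(o(aH)) - 1)$ in closed form.

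Next I would recall that $t$ is exactly the edge count of the power graph $\Gamma(\frac{G}{H})$, namely the Chakrabarty--Ghosh--Sen formula evaluated on the group $\frac{G}{H}$. For a cyclic group of order $n$ the elements group naturally by order: for each divisor $d$ of $n$ there are precisely $\phi(d)$ elements $aH$ with $o(aH) = d$, and each contributes the summand $2d - \phi(d) - 1$. Reindexing the sum over $\frac{G}{H}$ by this common order $d$ therefore collapses it to $t = \frac{1}{2}\sum_{d \mid n}(2d - \phi(d) - 1)\phi(d)$, which is the content of Corollary 4.3 of \cite{sen}.

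Finally I would substitute this expression for $t$ into the three-term formula of Theorem 6.1, leaving the terms $\binom{|H|}{2}\bigl(\frac{|G|}{|H|} - 1\bigr)$ and $(|G| - |H|)$ unchanged, to obtain the stated count.

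The step demanding the most attention is the reindexing in the second paragraph: one must verify that partitioning the elements of $\frac{G}{H}$ by their order reproduces the claimed sum, and in doing so it becomes apparent that the bracketed summand should read $2d - \phi(d) - 1$ rather than the printed $2d - \phi(n) - 1$. No genuine obstacle arises, however, since everything rests on the already-established Theorem 6.1 together with the standard count of elements of each order in a cyclic group.
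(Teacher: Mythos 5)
Your proof is correct and takes essentially the same route as the paper, which simply cites Corollary 4.3 of \cite{sen} to evaluate $t$ for the cyclic group $\frac{G}{H}$ of order $n$ and substitutes into Theorem 6.1. Your observation that the summand should read $2d-\phi(d)-1$ rather than the printed $2d-\phi(n)-1$ is also right; that is a typographical error in the stated formula.
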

Also,  by corollary $4.4$ of \cite{sen}, we have the following corollary.
\begin{corollary}
Let $G$ be a finite abelian group and $H$ be its any nontrivial subgroup such that $\frac{G}{H}\cong G_1\times G_2 \times \cdots \times G_r$. Then the number of edges of the graph $\Gamma_H(G)$ is $(\frac{1}{2}\sum_{d|n_i, i=i, 2, \cdots r}[2lcm\{d_1d_2\cdots d_r\}-\phi(lcm\{d_1d_2\cdots d_r\})-1]-n+1)|H|^2+\left(
                                                  \begin{array}{c}
                                                    |H| \\
                                                    2  \\
                                                  \end{array}
                                                \right)(\frac{|G|}{|H|}-1)+(|G|-|H|)
$, where each $G_i$ is a cyclic group of order $n_i$, $i=1, 2, \cdots,  r$.
\end{corollary}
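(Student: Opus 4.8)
The plan is to read this corollary as a pure specialization of Theorem $6.1$, so that essentially all the work is pushed into evaluating the single quantity $t = |E(\Gamma(\frac{G}{H}))|$ for the particular shape of quotient at hand. Theorem $6.1$ already supplies
\[
|E(\Gamma_H(G))| = (t - n + 1)|H|^2 + \binom{|H|}{2}\left(\frac{|G|}{|H|} - 1\right) + (|G| - |H|),
\]
where $t$ is the number of edges of the power graph $\Gamma(\frac{G}{H})$. Since $G$ is abelian, so is $\frac{G}{H}$, and it splits as the given product $G_1 \times \cdots \times G_r$ of cyclic groups; hence the whole task reduces to writing $t$ explicitly in terms of the orders $n_i$. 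First I would invoke the edge-counting formula of Chakrabarty et al., namely $|E(\Gamma(K))| = \frac{1}{2}\sum_{x \in K}(2\,o(x) - \phi(o(x)) - 1)$ for any finite group $K$, applied to $K = \frac{G}{H}$.

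Next I would convert this element-sum into a sum over tuples of divisors. The key structural fact is that in $G_1 \times \cdots \times G_r$ the order of a tuple $(g_1, \ldots, g_r)$ equals $\mathrm{lcm}(o(g_1), \ldots, o(g_r))$, and that as $g_i$ ranges over $G_i \cong \mathbb{Z}_{n_i}$ its order $d_i$ runs over the divisors of $n_i$, with exactly $\phi(d_i)$ elements of each order. Grouping the elements of $\frac{G}{H}$ by their order-profile $(d_1, \ldots, d_r)$ therefore replaces $\sum_{x}$ by $\sum_{d_i \mid n_i}$ and replaces $o(x)$ everywhere by $\mathrm{lcm}(d_1, \ldots, d_r)$, which is constant on each profile. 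This is exactly the content of Corollary $4.4$ of \cite{sen}, and it yields the bracketed expression for $t$; substituting that value of $t$ into the displayed formula from Theorem $6.1$ gives the claimed edge count.

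The algebraic bookkeeping (expanding the product and recombining the three summands) is harmless; the one place that genuinely requires care is the counting step that turns $\sum_{x}$ into $\sum_{d_i \mid n_i}$. The subtlety is that each divisor-profile must be weighted by the multiplicity $\prod_{i=1}^r \phi(d_i)$ of elements realising it, so that the correct value is $t = \frac{1}{2}\sum_{d_i \mid n_i}\left(\prod_{i=1}^r \phi(d_i)\right)\!\left(2\,\mathrm{lcm}(d_1,\ldots,d_r) - \phi(\mathrm{lcm}(d_1,\ldots,d_r)) - 1\right)$; dropping this weight already produces the wrong total in the smallest nontrivial case $\frac{G}{H} \cong \mathbb{Z}_3$. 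Accordingly I would isolate the order distribution of a direct product of cyclic groups as the single nontrivial lemma behind the computation, and then let the substitution run.
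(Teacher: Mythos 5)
Your proposal follows exactly the route the paper takes: the paper offers no argument beyond substituting the value of $t=|E(\Gamma(\frac{G}{H}))|$ supplied by Corollary $4.4$ of \cite{sen} into the formula of Theorem $6.1$, which is precisely your plan. Your additional observation about the multiplicity weight is correct and important: each divisor profile $(d_1,\ldots,d_r)$ is realised by $\prod_{i=1}^r\phi(d_i)$ elements of $G_1\times\cdots\times G_r$, so the correct value is $t=\frac{1}{2}\sum_{d_i\mid n_i}\bigl(\prod_{i=1}^r\phi(d_i)\bigr)\bigl(2\,\mathrm{lcm}(d_1,\ldots,d_r)-\phi(\mathrm{lcm}(d_1,\ldots,d_r))-1\bigr)$; the formula as printed in the corollary drops this factor (compare Corollary $6.2$, where the analogous weight $\phi(d)$ does appear), and without it the count already fails for $\frac{G}{H}\cong\mathbb{Z}_3$, exactly as you note. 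So your argument is sound and coincides with the paper's, and it additionally repairs a defect in the statement itself.
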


Let $\Gamma$ be a graph. Then the maximum size of a clique in $\Gamma$ is called the clique number of $\Gamma$  and is denoted by $w(\Gamma)$. In $\Gamma_H(G)$, $a\sim b$ if and only if $ah_1\sim bh_2$ for every $h_1, h_2 \in H$, and $e\sim a$ for every $a\in G\setminus H$, so it follows that clique number of $\Gamma_H(G)$ is of the form $|H|q+1$.
\begin{theorem}
Let $G$ be a finite group and $H$ be a normal subgroup. Then $w(\Gamma_H(G))=|H|(M-1)+1$, where $M=w(\Gamma(\frac{G}{H}))$.
\end{theorem}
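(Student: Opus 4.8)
The plan is to reduce the computation of $w(\Gamma_H(G))$ to that of $M=w(\Gamma(\frac{G}{H}))$ by exploiting the coset correspondence of Proposition $2.4$ together with the special behaviour of the identity coset. The crucial preliminary observation is that the identity $H$ of $\frac{G}{H}$ is a universal vertex of $\Gamma(\frac{G}{H})$: for every non-identity coset $aH$ we have $(aH)^{o(aH)}=H$, so $H\sim aH$. Consequently every maximum clique of $\Gamma(\frac{G}{H})$ contains $H$, and, more usefully, any clique of $\Gamma(\frac{G}{H})$ that avoids $H$ has size at most $M-1$ (otherwise adjoining $H$ would yield a clique of size larger than $M$). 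I single out this bound because it is exactly what forces the subtraction of $1$ from $M$ in the final formula.

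For the lower bound, I would start from a maximum clique $\{H,a_1H,\dots,a_{M-1}H\}$ of $\Gamma(\frac{G}{H})$, which may be assumed to contain $H$ by the remark above. By Proposition $2.4$ each pair $a_iH,a_jH$ adjacent in $\Gamma(\frac{G}{H})$ forces every element of $a_iH$ to be adjacent to every element of $a_jH$ in $\Gamma_H(G)$; moreover each coset $a_iH$ is itself a clique by Proposition $2.2$, and $e$ is adjacent to every vertex of $\Gamma_H(G)$. Hence $\{e\}\cup a_1H\cup\dots\cup a_{M-1}H$ is a clique of size $1+(M-1)|H|$, giving $w(\Gamma_H(G))\ge |H|(M-1)+1$.

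For the upper bound, I would take an arbitrary clique $C$ of $\Gamma_H(G)$ and set $S=\{\,aH:a\in C\,\}$, the collection of cosets that $C$ meets. Any two elements of $C$ in distinct cosets are adjacent in $\Gamma_H(G)$, so by Proposition $2.4$ those cosets are adjacent in $\Gamma(\frac{G}{H})$; thus $S$ is a clique of $\Gamma(\frac{G}{H})$. Since $e$ is the only vertex of $\Gamma_H(G)$ lying in $H$, the coset $H$ contributes at most one element to $C$, whereas every other coset contributes at most $|H|$. Splitting on whether $H\in S$: if $H\in S$ then $|S|\le M$ and $|C|\le 1+(|S|-1)|H|\le 1+(M-1)|H|$; if $H\notin S$ then the universality bound gives $|S|\le M-1$, so $|C|\le |S|\,|H|\le (M-1)|H|\le (M-1)|H|+1$. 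In either case $|C|\le |H|(M-1)+1$, matching the lower bound and completing the proof.

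The step that will require the most care is this upper bound, specifically the bookkeeping around the identity coset: because $H$ contributes only the single vertex $e$ rather than a full block of $|H|$ vertices, the crude estimate $|S|\,|H|$ overshoots, and it is precisely the universality of $H$ in $\Gamma(\frac{G}{H})$ (which caps $|S|$ at $M-1$ when $H\notin S$) that rescues the bound and pins the clique number to $|H|(M-1)+1$ rather than $|H|M$.
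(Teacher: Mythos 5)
Your proposal is correct and follows essentially the same route as the paper: the lower bound comes from blowing up a maximum clique of $\Gamma(\frac{G}{H})$ containing the universal vertex $H$ into full cosets together with $e$, and the upper bound comes from projecting a clique of $\Gamma_H(G)$ to a clique of cosets and counting at most $|H|$ vertices per non-identity coset and one vertex ($e$) for $H$. Your explicit case split on whether the projected clique contains $H$ is a slightly more careful rendering of the paper's step ``these $q$ cosets together with $H$ form a clique, so $q+1\leq M$,'' but it is the same argument.
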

\begin{proof}
Let $M=w(\Gamma(\frac{G}{H}))$ and let $\{ H, a_1H, a_2H, \cdots, a_{M-1}H\}$ be a clique in $\Gamma(\frac{G}{H})$ with the maximum size $M$. Suppose that $H=\{ e, h_1, h_2, \cdots, h_n  \}$. Then $\{ e, a_1, a_1h_1,\cdots, a_1h_n,a_2, \cdots, a_2h_n, \cdots, a_{M-1}, a_{M-1}h_1, a_{M-1}h_n \}$ is a clique in $\Gamma_H(G)$, by Proposition $2.2$, Proposition $2.4$, Proposition $2.6$. So $w(\Gamma_H(G))\geq |H|(M-1)+1$. Let $C$ be clique in $\Gamma_H(G)$ of maximum size. Then by Proposition $2.4$, $a\in C$ implies that $aH\subset C $ and hence $w(\Gamma_H(G))=|H|q+1$ where $q$ is then number of distinct coset in $C$. Now by Proposition $2.6$ these $q$ distinct cosets together with $H$ form a clique in $\Gamma(\frac{G}{H})$ and so $q+1\leq M$. Thus $ w(\Gamma_H(G))\leq |H|(M-1)+1$. Hence $w(\Gamma_H(G))=|H|(M-1)+1$.
\end{proof}

The chromatic number $\chi(\Gamma)$ of the graph $\Gamma$ is the smallest positive integer $r$ such that the vertices of $\Gamma$ can be coloured in $r$ colours so that no two adjacent vertices share the same colour.
A graph  $\Gamma$ is called perfect if $w(\Gamma)=\chi(\Gamma)$. Chudnovsky et. al. proved that a graph $\Gamma$ is perfect if and only if neither $\Gamma$ nor its complement   $\overline{\Gamma}$ contains an odd cycle of length atleast $5$ as an induced subgraph.
Here we show that every normal subgroup based power graph is perfect.
\begin{proposition}
Let $H$ be a normal subgroup of a finite group $G$. Then  $\Gamma_H(G)$ is perfect.
\end{proposition}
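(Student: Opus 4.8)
The plan is to use the Strong Perfect Graph Theorem characterization quoted just above the statement: it suffices to show that neither $\Gamma_H(G)$ nor its complement contains an induced odd cycle of length at least $5$. I would first record the structural observation, furnished by Propositions $2.2$, $2.4$ and $2.6$, that $\Gamma_H(G)$ is a clique blow-up (a vertex substitution) of the power graph $\Gamma(\frac{G}{H})$: each non-identity coset $a_iH$ is replaced by a clique $K_{|H|}$, the identity coset contributes the single vertex $e$, and two such blobs are joined either completely or not at all, according as the corresponding vertices are adjacent in $\Gamma(\frac{G}{H})$. Moreover $\Gamma(\frac{G}{H})$ is itself such a blow-up: two elements of $\frac{G}{H}$ are adjacent in the power graph exactly when the cyclic subgroups they generate are comparable under inclusion, so $\Gamma(\frac{G}{H})$ is the clique blow-up of the comparability graph $P$ of the poset of cyclic subgroups of $\frac{G}{H}$, the blob of a cyclic subgroup $C$ being its set of $\phi(|C|)$ generators. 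Composing the two substitutions, $\Gamma_H(G)$ is realized as a single clique blow-up of the comparability graph $P$.

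Next I would prove a blow-up lemma: if $\Gamma'$ is obtained from $\Gamma$ by replacing each vertex with a clique and joining blobs completely according to adjacency, then every induced odd cycle $C_n$, and every induced odd antihole $\overline{C_n}$, with $n\geq 5$ uses at most one vertex from each blob, and therefore already occurs as an induced subgraph of $\Gamma$. The key point is that two vertices lying in a common blob are true twins, that is, they are adjacent and have identical adjacency to every other vertex; but for $n\geq 5$ neither $C_n$ nor $\overline{C_n}$ contains a pair of true twins, so no induced copy of such a graph can place two of its vertices in one blob. Consequently, if $\Gamma$ has no odd hole and no odd antihole, then neither does $\Gamma'$.

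Finally I would invoke the classical fact that comparability graphs are perfect, so that $P$ contains neither an odd hole nor an odd antihole. Applying the blow-up lemma to $P$ and its clique blow-up $\Gamma_H(G)$, I conclude that $\Gamma_H(G)$ likewise has neither an odd hole nor an odd antihole, and hence is perfect by the Strong Perfect Graph Theorem.

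I expect the antihole case to be the main obstacle. For odd holes the ``at most one vertex per blob'' claim is transparent, since two same-blob vertices are adjacent and a chordless cycle of length at least $4$ cannot contain adjacent twins. To push the argument through for $\overline{C_n}$ one must reason in the complement and verify the twin-free property there: for $n\geq 5$ no two distinct vertices of $C_n$ share the same pair of cyclic neighbours, so $\overline{C_n}$ has no true twins. An alternative that sidesteps treating antiholes by hand is to cite Lov\'asz's replacement lemma, namely that the class of perfect graphs is closed under vertex substitution, together with the perfection of comparability graphs; but I would prefer the self-contained argument above, since the statement is already framed in terms of the Strong Perfect Graph Theorem.
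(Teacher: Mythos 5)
Your proof is correct, but it takes a genuinely different route from the paper's. The paper works directly with a hypothetical induced $5$-cycle $a_1a_2a_3a_4a_5$ in $\Gamma_H(G)$: each edge is oriented according to which coset is a power of which, transitivity of ``is a power of'' forces the orientations to alternate around the cycle, and an odd cycle cannot alternate; the complement is then dispatched by the observation that $\overline{C_5}\cong C_5$, so an induced $5$-cycle $b_1b_2b_3b_4b_5$ in $\overline{\Gamma_H(G)}$ yields the induced $5$-cycle $b_1b_3b_5b_2b_4$ in $\Gamma_H(G)$; the authors then assert that ``the general case is similar.'' You instead isolate the structural content --- $\Gamma_H(G)$ is a clique blow-up of $\Gamma(\frac{G}{H})$, which is in turn a clique blow-up of the comparability graph of the poset of cyclic subgroups of $\frac{G}{H}$ --- prove a twin-freeness lemma showing that induced odd holes and antiholes of length at least $5$ must use at most one vertex per blob and hence descend to the base graph, and then invoke the classical perfection of comparability graphs. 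The main thing your approach buys is completeness: the paper's alternation argument does extend to longer odd holes, but its treatment of antiholes rests entirely on the self-complementarity of $C_5$, which fails for $\overline{C_n}$ with $n\geq 7$, so the ``general case'' for antiholes is never actually argued; your true-twin lemma (together with your correct verification that $\overline{C_n}$ has no true twins for $n\geq 5$) handles all lengths and both holes and antiholes uniformly, and the variant via Lov\'asz's substitution lemma would even avoid the Strong Perfect Graph Theorem altogether. What the paper's approach buys is a short, self-contained computation in the one case it treats, with no appeal to comparability graphs.
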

\begin{proof}
For the sake of simplicity, we prove that  neither $\Gamma_H(G)$ nor $\overline{\Gamma_H(G)}$ contains any 5- cycle  as  an induced subgraph. The general case is similar.  Let if possible $a_1 a_2a_3a_4a_5$ be a five cycle. Suppose $a_2H=a_1^{n_1}H$. If $a_3H=a_2^{n_2}H, a_3H=a_1^{n_1n_2}H$ implies that $a_1\sim a_3$a contradiction. So $a_2H=a_3^{n_3}H$. Similarly if $a_3H=a_4^{n_4}H$ we have a contradiction. Thus $a_4H=a_3^{n_5}H$. Arguing similarly we have $a_4H=a_5^{n_6}H$. Now if $a_1H=a_5^{n_7}H$ then $a_5\sim a_2$ and if $a_5H=a_1^{n_8}H$, $a_1\sim a_4$, a contradiction.

Now we show that $\overline{\Gamma_H(G)} $ does not contain any $5$-cycle as an induced subgraph. In fact, if $b_1 b_2 b_3 b_4 b_5$ is an induced $5$-cycle in $\overline{\Gamma_H(G)} $, $b_1 b_3 b_5 b_2 b_4$ becomes automatically an induced $5$-cycle in $\Gamma_H(G)$.
\end{proof}
Combining the above proposition and the theorem $6.4$ we get the following corollary:
\begin{corollary}
 $\chi(\Gamma_H(G))=|H|(M-1)+1=w(\Gamma_H(G))$.
\end{corollary}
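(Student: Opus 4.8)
The plan is to combine the two results already established just above, namely Proposition $6.5$, which asserts that $\Gamma_H(G)$ is perfect, and Theorem $6.4$, which evaluates its clique number. First I would recall the notion of perfectness being used here, that is, the equality $w(\Gamma)=\chi(\Gamma)$. Proposition $6.5$ certifies that neither $\Gamma_H(G)$ nor its complement $\overline{\Gamma_H(G)}$ contains an induced odd cycle of length at least $5$, so the Strong Perfect Graph Theorem of Chudnovsky et al.\ applies and yields directly
$$\chi(\Gamma_H(G)) = w(\Gamma_H(G)).$$

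Next I would invoke Theorem $6.4$, which supplies the explicit value $w(\Gamma_H(G)) = |H|(M-1)+1$, where $M = w(\Gamma(\frac{G}{H}))$. Substituting this expression into the chromatic equality obtained in the previous step gives
$$\chi(\Gamma_H(G)) = w(\Gamma_H(G)) = |H|(M-1)+1,$$
which is precisely the claimed chain of identities. The whole argument is thus a two-line deduction once Proposition $6.5$ and Theorem $6.4$ are granted.

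Because both ingredients are already in hand, I expect no genuine obstacle; the only delicate point is conceptual rather than computational. One must make sure that the equality $\chi = w$ furnished by perfectness is being read off for the whole graph $\Gamma_H(G)$ itself, and not merely for some proper induced subgraph, since it is the equality on the full graph that is needed to pair with the clique number computed in Theorem $6.4$. As perfectness delivers $\chi = w$ on the graph in question, this concern is immediately dispatched and the corollary follows.
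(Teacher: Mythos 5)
Your proposal is correct and matches the paper exactly: the corollary is obtained by combining Proposition 6.5 (perfectness, hence $\chi=w$) with the value of $w(\Gamma_H(G))$ from Theorem 6.4. Your added remark about reading off $\chi=w$ for the full graph rather than a proper induced subgraph is a reasonable precaution but not something the paper dwells on.
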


The vertex connectivity  of a graph $\Gamma$ is denoted by $\kappa(\Gamma)$, is the minimum number of vertices whose deletion increases the number of connected component of the graph $\Gamma$ or has only one vertex. Since the vertices of $\Gamma_H(G)$ in every coset form a clique, to increase the number of connected component, we have to delete packets of $|H|$ vertices. Also $e$ is connected to every vertices. Hence $\kappa(\Gamma_H(G))$ is of the form $|H|t+1$.
\begin{theorem}
Let $G$ be a finite group and $H$ be a normal subgroup of $G$.  Then the vertex connectivity of the normal subgroup based power graph $\Gamma_H(G)$ is $(k-1)|H|+1$, where $k=\kappa(\Gamma(\frac{G}{H}))$.
\end{theorem}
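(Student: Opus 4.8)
The plan is to exploit the fact, made precise by Propositions 2.2, 2.4 and 2.6, that $\Gamma_H(G)$ is a blow-up of $Q := \Gamma(\frac{G}{H})$: the identity coset $H$ corresponds to the single universal vertex $e$, while every non-identity coset $a_iH$ (a vertex $v_i$ of $Q$) is replaced by a clique $B_i$ of size $|H|$, and two such cliques are completely joined precisely when $v_i$ and $v_j$ are adjacent in $Q$. Throughout I would assume $\frac{G}{H}$ is not a cyclic $p$-group, so that $Q$, and hence $\Gamma_H(G)$ by Theorem 3.2, is not complete and genuine vertex cuts exist; the complete case is immediate, since then $\kappa(\Gamma_H(G)) = |V(\Gamma_H(G))| - 1$.

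First I would record two structural observations used repeatedly. Since $e$ is adjacent to every other vertex (Proposition 2.1), every vertex cut of $\Gamma_H(G)$ must contain $e$; for the same reason the identity vertex $\bar e = H$ lies in every vertex cut of $Q$, so a minimum cut of $Q$ has the form $\{\bar e\} \cup \{v_{i_1}, \dots, v_{i_{k-1}}\}$. Second, because each coset is a clique (Proposition 2.2), whenever a set $S$ is deleted from $\Gamma_H(G)$ the surviving vertices of any one coset all lie in a single component of $\Gamma_H(G) - S$.

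For the upper bound, I would take a minimum cut $T$ of $Q$ with $\bar e \in T$ and set $S = \{e\} \cup \bigcup_j B_{i_j}$. Then $|S| = (k-1)|H| + 1$, and since deleting $T$ separates $Q$ with both sides consisting only of non-identity cosets that are pairwise non-adjacent across the cut, Proposition 2.4 shows the corresponding cliques are separated in $\Gamma_H(G) - S$; hence $\kappa(\Gamma_H(G)) \le (k-1)|H| + 1$.

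The crux is the lower bound. Let $S$ be a minimum cut, so $e \in S$ by the first observation, and write $V(\Gamma_H(G)) - S = A \sqcup B$ with no edges between the nonempty sets $A$ and $B$. By the clique observation, each non-identity coset contributes its surviving vertices entirely to $A$ or entirely to $B$, which lets me partition $V(Q)$ into $T := \{\bar e\} \cup \{v_i : B_i \subseteq S\}$, the blocks meeting $A$, and the blocks meeting $B$. Using Proposition 2.4 I would verify that an edge of $Q$ between an $A$-block and a $B$-block would force an $A$–$B$ edge in $\Gamma_H(G)$, a contradiction; hence $T$ is a vertex cut of $Q$ and $|T| \ge k$. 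Counting then gives $|S| - 1 = \sum_i |B_i \cap S| \ge |H| \cdot |\{v_i : B_i \subseteq S\}| = |H|(|T| - 1) \ge |H|(k-1)$, so $|S| \ge (k-1)|H| + 1$, and combined with the upper bound this proves the equality. The main obstacle is precisely this step: arguing that the partial cosets sitting inside $S$ may be charged against full blocks and that the resulting $T$ genuinely disconnects $Q$; once the blow-up correspondence is set up, everything else is bookkeeping.
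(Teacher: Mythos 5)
Your proposal is correct and follows essentially the same route as the paper: for the upper bound, delete $e$ together with the $|H|$-blocks lying over a minimum cut of $\Gamma(\frac{G}{H})$; for the lower bound, push a minimum cut of $\Gamma_H(G)$ down to a cut of $\Gamma(\frac{G}{H})$ --- and your charging argument for partially deleted blocks makes rigorous exactly the step the paper dismisses with ``it follows from Proposition $2.6$.'' One caveat: in the complete case ($\frac{G}{H}$ a cyclic $p$-group with $|H|>1$) the stated formula actually fails under the paper's own definition of $\kappa$ (e.g.\ $G=\mathbb{Z}_4$, $H=\{0,2\}$ gives $\kappa(\Gamma_H(G))=2$ while $(k-1)|H|+1=1$), so your ``immediate'' disposal of that case does not in fact verify the formula there --- but the paper's proof silently assumes a genuine cut exists as well, so this is a defect of the theorem statement rather than of your argument for the main case.
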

\begin{proof}
Let $\kappa(\Gamma(\frac{G}{H}))=k$ and we delete the vertices $H, a_1H, \cdots, a_{k-1}H$ to increase the number of connected components of $\Gamma(\frac{G}{H})$. Since the vertices of $\Gamma_H(G)$ in every coset form a clique, to reflect the deletion of $\Gamma(\frac{G}{H})$ on $\Gamma_H(G)$, we have to delete all of $|H|(k-1)$ vertices in $a_1H\bigcup a_2H \bigcup \cdots \bigcup a_{k-1}H$ and identity $e$. Then by Proposition $2.6$, $\kappa(\Gamma_H(G))\leq |H|(k-1)+1$. Also it follows from Proposition $2.6$, that  the deletion of  less number of vertices than $|H|(k-1)+1$ in $\Gamma_H(G)$ increases the number of connected components,  contradicting that $\kappa(\Gamma(
 \frac{G}{H}))=k$. Thus $\kappa(\Gamma_H(G))=|H|(k-1)+1$.
\end{proof}

\bibliographystyle{amsplain}

\end{document}